\newtheorem{definition}{Definition}[section]
\newtheorem{theorem}[definition]{Theorem}
\newtheorem{lemma}[definition]{Lemma}
\newtheorem{corollary}[definition]{Corollary}
\newtheorem{remark}[definition]{Remark}
\newtheorem{example}[definition]{Example}
\newtheorem{Proposition}[definition]{Proposition}
\DeclareMathOperator{\PG}{PG}
\DeclareMathOperator{\F}{\mathcal{F}}
\renewcommand{\P}{\mathcal{P}}
\renewcommand{\O}{\mathcal{O}}
\newcommand{\<}{\langle}
\renewcommand{\>}{\rangle} % was: tabbing command
\newcommand{\cD}{{\mathcal D}}
\newcommand{\cF}{{\mathcal F}}
\newcommand{\cL}{{\mathcal L}}
\newcommand{\cO}{{\mathcal O}}
\newcommand{\cP}{{\mathcal P}}
\newcommand{\cQ}{{\mathcal Q}}
\newcommand{\cR}{{\mathcal R}}
\newcommand{\gauss}[2]{\genfrac{[}{]}{0pt}{}{#1}{#2}}
\newcommand{\qbin}[2]{\genfrac{[}{]}{0pt}{}{#1}{#2}}
\title{A common generalization of hypercube partitions and ovoids in polar spaces}
\author{Jozefien D'haeseleer, Ferdinand Ihringer  \& Kai-Uwe Schmidt} 
\begin{document}
\maketitle

\begin{abstract}
    We investigate what we call generalized ovoids, that is
    families of totally isotropic subspaces
    of finite classical polar spaces such that each maximal
    totally isotropic subspace contains precisely one
    member of that family. This is a generalization
    of ovoids in polar spaces as well as the natural
    $q$-analog of a subcube partition of the hypercube
    (which can be seen as a polar space with $q=1$).
    %Our main result is the non-existence of
    %generalized ovoids for $q=p^h$, $p$ prime, for
    %when $p$ is fixed, some positive integer
    %$k$ is fixed, the generalized ovoid does
    %not contain any subspaces of dimension larger than $k$,
    %and the rank of the polar space is sufficiently large.
    Our main result proves that a generalized ovoid of $k$-spaces
    in polar spaces of large rank does not exist.
    More precisely, for $q=p^h$, $p$
    prime, and some positive integer $k$, a generalized ovoid of $k$-spaces in a polar space
    $\cP$ with rank $r \geq r_0(k, p)$ in a vector space $V(n,q)$ does not exist.
\end{abstract}

\paragraph*{Dedication}
The following work is written in memory of Kai-Uwe Schmidt.
In March 2023 the first author, Jozefien D'haeseleer,
visited Kai-Uwe Schmidt in Paderborn for one week.
Mentioning the second author's recent preprint \cite{Irrsubcubepart}
as motivation, Kai suggested to investigate
the (very natural) topic of this preprint. That they did,
but both were occupied with other projects
after Jozefien had left Paderborn.
We hope that the present work provides an
execution of his idea which he would find
interesting to read.

\section{Introduction}

An ovoid of a polar space is a family of points $\cO$ such that
each generator (maximal totally isotropic subspace)
contains precisely one element of $\cO$.
The study of ovoids goes back to the geometric
construction of certain Suzuki groups by Tits \cite{Tits62}.
Ovoids in polar spaces were systematically defined
and studied by Thas \cite{Thas81}.
A \textit{generalized ovoid}, as introduced here,
is a family of totally isotropic subspaces $\cO$
such that each generator contains precisely one
element of $\cO$. This is the natural $q$-analog
of a subcube partition of a hypercube.
Let us sketch this connection in broad strokes
and in some greater detail later in Section \ref{sec:defs}.

A subcube partition is a partition of the 
hypercube $\{ 0, 1\}^n$ into subcubes.
Here we express subcubes as strings in 
$\{ 0, 1 , {*} \}$.
There are countless works written on them,
see the references within \cite{Irrsubcubepart}.
Following \cite{Irrsubcubepart}, we call 
a subcube partition \textit{irreducible}
if the only sub-partitions whose unions 
are a subcube are singletons and the entire partition.
We say that a subcube $s \in \{ 0, 1, {*}\}^n$ 
\textit{mentions} coordinate $i$ if $s_i \in \{ 0, 1 \}$.
A subcube partition $\cF$ is \textit{tight} if
it mentions all coordinates, that is
for each coordinate $i \in \{ 1, \ldots, n \}$
there exists an $s \in \cF$ such that $s_i \neq {*}$.
A subcube partition is called \textit{homogeneous}
if all its subcubes have the same dimension.
The main goal of \cite{Irrsubcubepart} is to determine
the minimum size of a tight irreducible subcube 
partition $\cF$ for any given $n$, 
but also other natural extremal questions
are investigated. More recently, but in the same vibe,
Alon and Balogh estimate the total number of partitions
of the hypercube in \cite{AB2024}.

The investigation in \cite{Irrsubcubepart} is mainly
motivated by complexity theory, see \cite{ComplexityPaper},
and was extended to hypercubes with larger alphabets
as well as linear subspaces instead of subcubes,
see also \cite{AffinePaper}. If we consider hypercubes
as distance-regular graphs with classical parameters,
cf.~Table 6.1 in \cite{BCN}, 
or as thin spherical buildings of one of 
the types $B_n/C_n/D_n$ restricted to \textit{generators} (maximal isotropic 
subspaces), then it is natural 
to also consider the generalization to dual polar graphs,
respectively, the graph of generators
of polar spaces. Note that here points of $\{ 0, 1\}^n$
correspond to generators of the polar space,
while points of the polar space correspond to 
$(n-1)$-dimensional subcubes of $\{ 0, 1 \}^n$.
Thus, we can study the $q$-analog of subcube partitions
in this setting. This is precisely the study
of generalized ovoids.

Our results are devided into two parts.
We give a limited number of constructions in Section \ref{sec:constr}.
Then in Section \ref{sec:nonex} we will prove our main result,
the asymptotic non-existence of generalized ovoids:

\begin{theorem}\label{thm:main_nonex}
    Let $p$ be a prime and let $k$ be a positive integer.
    Then there exists a constant $r_0(p, k)$ such that for all $r \geq r_0(p, k)$
    the following holds:
    For any positive integer $h$, put $q=p^h$.
    Let $\cP$ be a polar space of rank $r$
    over the field with $q$ elements.
    Then $\cP$ does not possess a family $\cO$
    of $k$-spaces such that each generator of $\cP$
    contains precisely one element of $\cO$.
\end{theorem}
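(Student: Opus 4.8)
The plan is to recast the covering condition spectrally and then extract a $p$-adic integrality obstruction. I would view the generators of $\cP$ as the vertices of the dual polar graph $\Gamma$ of rank $r$; the hypothesis that every generator contains exactly one member of $\cO$ says precisely that $\mathbf{1} = \sum_{\pi \in \cO} \chi_\pi$, where $\chi_\pi$ is the characteristic vector of the set of generators through $\pi$. First I would record the elementary double count: a $k$-space (totally isotropic, of projective dimension $k$) lies in $g_{r-k-1}$ generators, where $g_m$ denotes the number of generators of a rank-$m$ polar space of the same type, so that $|\cO| = g_r/g_{r-k-1} = \prod_{i=0}^{k}(q^{\,r-1-i+e}+1)$, with $e$ the usual type parameter. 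Next I would use that the generators through a fixed totally isotropic subspace of vector rank $k+1$ form a completely regular code whose characteristic vector lies in the bottom eigenspaces $V_0 \oplus V_1 \oplus \cdots \oplus V_{k+1}$ of $\Gamma$. Since $\mathbf{1} \in V_0$, projecting $\mathbf{1} = \sum_\pi \chi_\pi$ onto $V_1, \ldots, V_{k+1}$ forces these components to cancel; thus $\cO$ behaves as a Delsarte design of strength $k+1$ in the dual polar scheme.

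Second, I would turn this design property into explicit regularity. For a totally isotropic subspace $S$, the quotient $S^\perp/S$ is again a polar space of the same type of suitably lower rank, its generators are exactly the generators of $\cP$ through $S$, and the unique-element condition descends to a (mixed-dimensional) generalized ovoid in the quotient. Running this residue argument and combining it with the cancellation of the $V_j$-components, one obtains that the pertinent intersection numbers of $\cO$ --- for instance the number of members meeting a fixed generator, or lying in a fixed residue, in a prescribed dimension --- are \emph{forced}: they are uniquely determined rational functions of $q$ and $r$. Being genuine counts, each of these must be a non-negative integer, and this is where the rigidity of the $q$-analog (as opposed to the $q=1$ subcube case, where the analogous partitions do exist) must be exploited.

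The crux --- and the step I expect to be hardest --- is the final $p$-adic analysis. Because $q \equiv 0 \pmod p$, every factor $q^m+1$ and every Gaussian binomial $\qbin{a}{b}_q$ is a unit modulo $p$; hence $|\cO|$ and all the leading counts are $\equiv 1 \pmod p$, and no first-order congruence yields anything. The obstruction must instead be read from higher powers of $p$: I would estimate the $p$-adic valuation $v_p$ of the forced intersection numbers and of the eigenspace multiplicities of $\Gamma$ (which are honest $q$-polynomials), and show that for $r \ge r_0(p,k)$ one of the forced numbers fails to be a $p$-adic integer, contradicting its combinatorial meaning. The difficulty is to make this estimate uniform in $h$, so that a single threshold $r_0(p,k)$ works for all $q = p^h$ simultaneously, and to control the valuations of products $\prod_m (q^m+1)$ and of the $q$-binomial multiplicities as the rank grows; this is precisely the feature that is invisible at $q=1$ and that produces the asymptotic non-existence.
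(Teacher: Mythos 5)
Your proposal is a plan rather than a proof: the step that would actually yield a contradiction --- showing that some ``forced'' count has negative $p$-adic valuation once $r \geq r_0(p,k)$ --- is exactly the step you defer, and nothing in your setup makes it attainable. Two concrete problems. First, the design-theoretic reduction does not force the quantities you would need. The identity $\mathbf{1} = \sum_{\pi \in \cO} \chi_\pi$ only forces the projections of $\sum_\pi \chi_\pi$ onto the nontrivial eigenspaces to vanish, which pins down global averages (e.g.\ $|\cO|$, or the \emph{average} number of members of $\cO$ in a degenerate hyperplane section), not individual intersection numbers: the number of members of $\cO$ lying in a \emph{fixed} hyperplane or residue is genuinely not determined by these conditions. (This is precisely why the paper's own argument has to pick a hyperplane containing \emph{at least the average} number of members and then induct, rather than reading off a forced value.) Second, the quantities that \emph{are} forced are built from Gaussian binomials $\qbin{a}{b}_q$ and products of factors $q^m+1$, which are honest integers for every prime power $q$, so there is no visible source of non-integrality; your own observation that everything is $\equiv 1 \pmod p$ is a symptom of this, and no amount of valuation bookkeeping of multiplicities is going to break it. Since the objects do exist at $q=1$, any proof must import genuinely characteristic-$p$ information beyond rational spectral data, and your outline contains no such ingredient.

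The paper's proof concentrates all of its characteristic-$p$ input in one tool you never invoke: the Blokhuis--Moorhouse $p$-rank bound (Theorem \ref{thm:blokhouse}), which bounds any partial ovoid in $V(n,q)$ by $\binom{p+n-2}{p-1}^h + 1$, i.e.\ polynomially in $n$ with exponent $h(p-1)$. The induction on $k$ then goes: choose a degenerate hyperplane $H$ containing at least the average number of members of $\cO$ (Lemma \ref{lem:lns_in_deg}); members not inside $H$ meet $H$ in $(k-1)$-spaces forming a partial $(r-1,k-1)$-ovoid in the quotient of $H^\perp$; and each such $(k-1)$-space $L$ lies in at most $(p+2r-2k+3)^{h(p-1)}$ members of $\cO$, because $\{K/L : K \in \cO,\ K \cap H = L\}$ is a partial ovoid in the quotient polar space, where Theorem \ref{thm:blokhouse} applies. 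This produces the bound of Theorem \ref{thm:main_nonex_quant}, polynomial in $r$ for fixed $p,k$, which contradicts the exponential-in-$r$ size $\prod_{i=1}^k (q^{r+e-i}+1)$ forced by double counting (Lemma \ref{lem:nrinovoid}); the parameter $h$ cancels when comparing the two sides, which is what gives uniformity in $h$. If you want to rescue your approach, the missing idea is a substitute for this $p$-rank bound; the spectral and integrality machinery alone cannot produce the threshold $r_0(p,k)$.
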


Theorem \ref{thm:main_nonex}
is a generalization of a
classical result by Blokhuis and Moorhouse who observed
the following in Theorem 1.6 in \cite{BM1995}.

\begin{theorem}[Blokhuis \& Moorhouse (1995)] \label{thm:blokhouse}
    Let $p$ be a prime, $h$ a positive integer, and $q = p^h$.
    Let $\cO$ be a  partial ovoid of any finite classical polar space
    naturally embedded in a vector space of dimension $n$
    over the field with $q$ elements.
    Then
    \[
     |\cO| \leq \binom{p+n-2}{p-1}^h + 1 \leq (p+n-1)^{h(p-1)} + 1.
    \]
\end{theorem}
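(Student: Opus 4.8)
The plan is to use the polynomial method. Write $\cO = \{\langle x_1\rangle, \ldots, \langle x_m\rangle\}$, fixing a representative vector $x_i$ for each point, and let $F$ denote the (bi- or sesqui-)linear form attached to the polar space $\cP$ on $V(n,q)$. Recall that a partial ovoid is a set of pairwise non-collinear isotropic points, and that two isotropic points $\langle x\rangle,\langle y\rangle$ are non-collinear exactly when the line they span is not totally isotropic, i.e.\ $F(x,y)\neq 0$. Thus $F(x_i,x_i)=0$ for all $i$ while $F(x_i,x_j)\neq 0$ for all $i\neq j$. For each $i$ I would introduce the polynomial
\[
P_i(Y) = 1 - F(x_i,Y)^{q-1}, \qquad Y=(Y_1,\ldots,Y_n).
\]
Since every nonzero element of $\Ff_q$ is a $(q-1)$-th root of unity, one gets $P_i(x_j)=\delta_{ij}$: the value is $1$ when $i=j$ because $F(x_i,x_i)=0$, and $0$ when $i\neq j$ because $F(x_i,x_j)\in\Ff_q^{\ast}$. (This is well defined on projective points, as rescaling $x_j$ by $\lambda\in\Ff_q^\ast$ multiplies $F(x_i,x_j)^{q-1}$ by $\lambda^{q-1}=1$.) Evaluating any relation $\sum_i c_iP_i=0$ at the vectors $x_j$ forces each $c_j=0$, so the $P_i$ are linearly independent. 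It therefore suffices to produce a space of polynomials of dimension at most $\binom{p+n-2}{p-1}^h+1$ that contains every $P_i$.

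The degree of $F(x_i,Y)^{q-1}$ is $q-1$, far too large for a naive monomial count, so the crucial step is to exploit the factorization $q-1=(p-1)\,(1+p+\cdots+p^{h-1})$ together with the Frobenius map. In characteristic $p$ the $p^j$-th power of the linear form $F(x_i,Y)=\sum_k a_kY_k$ equals $\sum_k a_k^{p^j}Y_k^{p^j}$, a linear form in the \emph{twisted} variables $Y_1^{p^j},\ldots,Y_n^{p^j}$. Hence
\[
F(x_i,Y)^{q-1} = \prod_{j=0}^{h-1}\bigl(F(x_i,Y)^{p^j}\bigr)^{p-1},
\]
and the $j$-th factor is a homogeneous polynomial of degree $p-1$ in the $n$ twisted variables $Y_1^{p^j},\ldots,Y_n^{p^j}$. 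The span of all products of this shape sits inside the tensor product, over $j=0,\ldots,h-1$, of the spaces of homogeneous degree-$(p-1)$ polynomials in $n$ variables, each of dimension $\binom{p+n-2}{p-1}$. (Concretely, the resulting monomials $\prod_k Y_k^{\sum_j a_{jk}p^{j}}$ with $0\le a_{jk}\le p-1$ and $\sum_k a_{jk}=p-1$ are distinct, since the $a_{jk}$ are base-$p$ digits, so the dimension is exactly $\binom{p+n-2}{p-1}^h$.) Consequently each $P_i$ lies in the sum of the line $\langle 1\rangle$ and a space of dimension at most $\binom{p+n-2}{p-1}^h$, which by the independence above gives $m\le \binom{p+n-2}{p-1}^h+1$.

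The second inequality in the statement is then a routine binomial estimate, bounding $\binom{p+n-2}{p-1}$ by $(p+n-1)^{p-1}$. The step I expect to require the most care is making the dimension count uniform across all families of finite classical polar spaces: I must verify that the delta-function identity $P_i(x_j)=\delta_{ij}$ and the Frobenius factorization go through for the symplectic and orthogonal forms in both odd and even characteristic, and for the Hermitian forms, where the field conjugation must be absorbed into the twisting of the variables, and that passing from polynomials to functions on the point set does not destroy the independence of the $P_i$. The conceptual heart of the argument -- and the reason the bound is an $h$-th power of a single binomial coefficient rather than one binomial coefficient of degree $q-1$ -- is precisely the product decomposition across the $h$ Frobenius levels; the remaining work is bookkeeping.
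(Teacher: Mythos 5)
The paper contains no proof of this statement: it is imported as Theorem 1.6 of Blokhuis and Moorhouse \cite{BM1995}, so there is no internal argument to compare against. Your reconstruction is correct and is essentially the standard proof of that result (Blokhuis and Moorhouse phrase it as a $p$-rank bound for the polarity incidence matrix; your delta-function family $P_i(Y)=1-F(x_i,Y)^{q-1}$, linearly independent by evaluation at the $x_j$ and confined to the span of $1$ together with the $h$-fold Frobenius-twisted tensor space of dimension $\binom{p+n-2}{p-1}^h$, is exactly the function-space formulation of the same rank computation). The two points you flag as needing care do go through as you sketch. For the orthogonal spaces one takes $F$ to be the polarization $B(x,y)=Q(x+y)-Q(x)-Q(y)$: in odd characteristic $B(x,x)=2Q(x)=0$ on singular points, in characteristic $2$ the form $B$ is alternating so $B(x,x)=0$ identically, and for singular $x,y$ the line $\langle x,y\rangle$ is totally singular if and only if $B(x,y)=0$ --- this remains a correct collinearity test even when $B$ is degenerate (parabolic quadrics in even characteristic), since the radical is spanned by the non-singular nucleus and so never meets $\cO$. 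For the Hermitian spaces $F(x_i,Y)=\sum_k a_kY_k^{\sqrt{q}}$ is semilinear, so the factor at Frobenius level $j$ is a degree-$(p-1)$ form in the variables $Y_k^{p^{j+h/2}}$; reducing exponents via $Y^q=Y$ (legitimate because your independence argument is an argument about functions on $\mathbb{F}_q^n$, and reduction only shrinks the spanning space) merely permutes the $h$ twist levels cyclically, leaving the dimension bound $\binom{p+n-2}{p-1}^h$ intact. With those two verifications written out, your proof is complete, and the final inequality $\binom{p+n-2}{p-1}\leq (p+n-1)^{p-1}$ is indeed routine.
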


Subsequently, the result by Blokhuis and Moorhouse has been slightly improved by
Arslan and Sin, see \cite{AS2011}.
The rank of a polar space satisfies $n-2 \leq 2r \leq n$.
An ovoid of a rank $r$ polar space has size at least
$q^{r-1}+1 = p^{h(r-1)} + 1$. %\JD{probably stupid remark, but this number is smaller than the bound of Aart, so it gives no classification?? I'm confused.} For $p$ fixed and $r$ sufficiently
large, this is clearly less than $(p+n-1)^{h(p-1)} \leq (p+2r+1)^{h(p-1)}$.
Hence, for $k=0$, Theorem \ref{thm:main_nonex} is a special case.
We will provide a quantitative statement of Theorem \ref{thm:main_nonex}
in Section \ref{sec:nonex}. In the abstract we claim something
slightly stronger, namely that $\cO$ having subspaces
of dimension at most $k$ shows non-existence.
This will also follow from the
quantitative discussion in Section \ref{sec:nonex}.

\section{Polar Spaces}

For an extensive and detailed introduction about finite classical polar spaces, we refer to \cite{thashirschfeld}. We only repeat the necessary definitions and information. Note that in this article, we will work with algebraic dimensions, not projective dimensions. The subspaces of dimension $1$ (vector lines), $2$ (vector planes) and $3$ (vector solids) are called \emph{points}, \emph{lines} and \emph{planes}, respectively.
We denote the vector space of dimension $n$ over the field with $q$ elements by $V(n, q)$.

%Polar spaces of rank $d\geq 2$ are incidence geometries that contain subspaces of dimension $0,\dots, d-1$. 
We start with the definition of finite classical polar spaces.
\begin{definition}
Finite classical polar spaces are incidence geometries consisting of subspaces that are totally isotropic with respect to a non-degenerate quadratic or non-degenerate reflexive sesquilinear form on a vector space $\mathbb{F}_q^{n}$. 
\end{definition}

A bilinear form for which all vectors are isotropic is called \emph{symplectic}; if $f(v,w)={f(w,v)}$ for all $v,w\in V$, then the bilinear form is called symmetric.
A sesquilinear form on $V$ is called \emph{Hermitian} if the corresponding field automorphism $\theta$ is an involution and $f(v,w)={f(w,v)}^{\theta}$ for all $v,w\in V$.
%At various places in this article we will consider the finite classical  polar spaces as substructures of a vector space, in which they can naturally be embedded.
%We will however always use the \emph{vector space dimension}, which is one higher than the \emph{projective dimension}.
 We now list the finite classical polar spaces of rank $r$.

%{\color{orange} Variabelen met drukletters of met kleine letters? Belangrijk voor de vergelijkingen.}\jd{groot}

\begin{itemize}
 \item The hyperbolic quadric $\mathcal{Q}^{+}(2r-1,q)$ arises from a hyperbolic quadratic form on $V(2r,q)$. Its standard equation is $X_{0}X_{1}+\dots+X_{2r-2}X_{2r-1}=0$.
 \item The parabolic quadric $\mathcal{Q}(2r,q)$ arises from a parabolic quadratic form on $V(2r+1,q)$. Its standard equation is $X^{2}_{0}+X_{1}X_{2}+\dots+X_{2r-1}X_{2r}=0$.
 \item The elliptic quadric $\mathcal{Q}^{-}(2r+1,q)$ arises from an elliptic quadratic form on $V(2r+2,q)$. Its standard equation is $g(X_{0},X_{1})+\dots+X_{2r-2}X_{2r-1}+X_{2r}X_{2r+1}=0$ with $g$ a homogeneous irreducible quadratic polynomial over $\F_{q}$.
 \item The Hermitian polar space $\mathcal{H}(2r-1,q)$ (where $q$ is a square) arises from a Hermitian form on $V(2r,q)$, constructed using the field automorphism $x\mapsto x^{\sqrt{q}}$. Its standard equation is $X^{\sqrt{q}+1}_{0}+X^{\sqrt{q}+1}_{1}+\dots+X^{\sqrt{q}+1}_{2d-1}=0$.
 \item The Hermitian polar space $\mathcal{H}(2r,q)$ (where $q$ is square) arises from a Hermitian form on $V(2r+1,q)$, constructed using the field automorphism $x\mapsto x^{\sqrt{q}}$. Its standard equation is $X^{\sqrt{q}+1}_{0}+X^{\sqrt{q}+1}_{1}+\dots+X^{\sqrt{q}+1}_{2r}=0$.
 \item The symplectic polar space $\mathcal{W}(2r-1,q)$ arises from a symplectic form on $V(2r,q)$. For this symplectic form we can choose an appropriate basis $\{e_{1},\dots,e_{r},e'_{1},\dots,e'_{r}\}$ of $V(2r,q)$ such that $f(e_{i},e_{j})=f(e'_{i},e'_{j})=0$ and $f(e_{i},e'_{j})=\delta_{i,j}$, with $1\leq i,j\leq r$.
\end{itemize}

In this article all polar spaces we will handle are finite classical polar spaces.
We also give the definition of the rank and the parameter $e$ of a polar space.
\begin{definition}
The subspaces of maximal dimension (being $r$) of a polar space of rank $r$ are called \emph{generators}. We define the \emph{parameter} $e$ of a polar space $\mathcal{P}$ over $\F_{q}$ as the number $e$ such that the number of generators through an $(r-1)$-space of $\mathcal{P}$ equals $q^e+1$.
\end{definition}
The parameter of a polar space only depends on the type of the polar space and not on its rank. In Table \ref{tabele} we give the parameter $e$ of the polar spaces. 

\begin{table}[ht]\begin{center}
  \begin{tabular}{ | c | c| }
    \hline
    Polar space & $e$  \\ \hline \hline
    $Q^+(2r-1,q)$ & $0$  \\ \hline
    $H(2r-1,q)$ & $1/2$  \\ \hline
    $W(2r-1,q)$ & $1$  \\ \hline
    $Q(2r,q)$ & $1$  \\ \hline
    $H(2r,q)$ & $3/2$  \\ \hline
    $Q^-(2r+1,q)$ & $2$  \\ \hline
  \end{tabular}
  \caption{The parameter $e$.}\label{tabele}
\end{center}\end{table}

An important concept, associated to polar spaces, are polarities.
\begin{definition}
 A \emph{polarity} on $V(n,q)$ is an inclusion reversing involution $\perp$ acting on the subspaces of $V(n,q)$. In other words, $\perp^2$ is the identity, and any two subspaces $\pi$ and $\sigma$ satisfy $\pi \subseteq \sigma\Leftrightarrow\rho^\perp \subseteq \pi^\sigma$.
\end{definition}

Consider a non-degenerate sesquilinear form $f$ on the vector space $V=V(n,q)$, or the bilinear form $f$, based on a non-degenerate quadratic form $Q$ on the vector space $V=V(n,q)$, with $f(v,w)=Q(v+w)-Q(v)-Q(w)$. For a subspace $W$ of $V$, we can define its orthogonal complement with respect to $f$:
\begin{align*}
 W^\perp=\{v\in V \ | \ \forall w\in W: f(v,w)=0\}.
\end{align*}
The  map $\perp$ that maps the subspace $W$ onto the  subspace $W^\perp$, is a polarity, and every polarity arises in this way.
% For $q$ odd, the subspaces of a quadric or Hermitian variety in are precisely the subspaces that are contained in their image under the polarity. Geometrically, for $q$ odd, the image of a subspace on the polar space under the corresponding polarity, is its tangent space.
To every (finite classical) polar space a polarity is associated (but not the other way around).
The image of a subspace $\pi$ with dimension $t$ on the polar space $\mathcal{P}$ of rank $r$ under the corresponding polarity is its \emph{tangent space} $T_{\pi}(\mathcal{P})$, which is the subspace spanned by the $(t+1)$-spaces through $\pi$ such that they are contained in the polar space, or meet the polar space in $\pi$.
Moreover, note that $T_{\pi}(\cP)\cap \cP$ is a cone with vertex $\pi$ and with basis a polar space $\cP'$ of the same type as $\cP$, and with rank $r-t$.
%lines meeting $\pi$ in a point and either fully contained in $\mathcal{P}$ or meeting $\mathcal{P}$ in precisely one point.
\medskip

We will work with the \emph{Gaussian binomial coefficient} $\begin{bmatrix}a\\b \end{bmatrix}_q$ for positive integers $a,b$ and $q\geq 2$:

\begin{align*}
\begin{bmatrix}a\\b \end{bmatrix}_q=\prod_{i=1}^b \frac{q^{a-b+i}-1}{q^i-1} = \frac{(q^a-1)\dots (q^{a-b+1}-1)}{(q^b-1)\dots (q-1)}.
\end{align*}
We write $\qbin ab$ if the field size $q$ is clear from the context. The number $\qbin ab_q$ equals the number of $b$-spaces in $V(a,q)$, and the equality $\qbin{a}{b}_q = \qbin{a}{a-b}_q$ follows immediately from duality.\\

\begin{lemma}[{\cite[Lemma 9.4.1]{BCN}}]\label{lem:numberofkinpolar}
The number of $k$-spaces in a finite classical polar space $\mathcal{F}$ of rank
	$r$ and with parameter $e$, embedded in a vector space over the field $\mathbb{F}_q$,  
	is given by $$ \qbin{r}{k} \prod_{i=1}^{k} (q^{r+e-i}+1). $$
		Hence, the number of points in $\mathcal{F}$ is $\qbin{r}{1}(q^{r+e-1}+1)$. The number of
		 generators in $\mathcal{F}$ is $\prod_{i=1}^{r} (q^{r+e-i}+1)$.
\end{lemma}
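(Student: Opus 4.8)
The plan is to prove Lemma \ref{lem:numberofkinpolar} by a double-counting argument combined with the recursive structure of polar spaces described by the tangent-space cone. First I would recall the basic recursion: if $N_k(r)$ denotes the number of $k$-spaces in a polar space $\cP$ of rank $r$ and parameter $e$, then fixing a point $P \in \cP$ and passing to the quotient, the residue $T_P(\cP) \cap \cP$ modulo $P$ is again a polar space of the same type but rank $r-1$ (this is exactly the cone statement quoted just before the lemma). Thus the $k$-spaces through a fixed point $P$ correspond bijectively to the $(k-1)$-spaces of a rank-$(r-1)$ polar space of the same parameter $e$. Counting incident pairs $(P, \pi)$ with $P$ a point, $\pi$ a $k$-space, and $P \subseteq \pi$, in two ways gives
\begin{equation*}
N_1(r) \cdot N_{k-1}(r-1) = N_k(r) \cdot \qbin{k}{1},
\end{equation*}
since a $k$-space contains $\qbin{k}{1}$ points. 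This sets up an induction on both $r$ and $k$.

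The base case is $k=0$ (where $N_0(r)=1$, the empty/zero subspace being the unique $0$-space), and the case $k=1$, the number of points, which I would establish directly. For the point count, I would use the standard fact that the number of points of a rank-$r$ polar space with parameter $e$ equals $(q^{r-1}+1)(q^{r+e-1}+1)/(q-1)$ in one common normalization; more cleanly, I would verify $N_1(r) = \qbin{r}{1}(q^{r+e-1}+1)$ directly from the counting of isotropic points relative to the known generator count, or take it as the established starting point and build the general formula from the recursion. Writing $N_1(r) = \qbin{r}{1}(q^{r+e-1}+1)$, the inductive hypothesis $N_{k-1}(r-1) = \qbin{r-1}{k-1}\prod_{i=1}^{k-1}(q^{(r-1)+e-i}+1)$, and the identity $\qbin{r}{1}\qbin{r-1}{k-1} = \qbin{k}{1}\qbin{r}{k}$ (a standard Gaussian-binomial manipulation arising from $\qbin{r}{k}\qbin{k}{1} = \qbin{r}{1}\qbin{r-1}{k-1}$) then feed into the displayed counting identity to yield
\begin{equation*}
N_k(r) = \frac{\qbin{r}{1}}{\qbin{k}{1}} N_{k-1}(r-1) = \qbin{r}{k}\prod_{i=1}^{k}(q^{r+e-i}+1),
\end{equation*}
after noting that the factor $(q^{r+e-1}+1)$ from $N_1(r)$ supplies the $i=1$ term and the shifted product from $N_{k-1}(r-1)$ supplies the remaining terms $i=2,\dots,k$ since $(q^{(r-1)+e-(i-1)}+1) = (q^{r+e-i}+1)$.

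The final two assertions of the lemma are immediate specializations: the number of points is $N_1(r) = \qbin{r}{1}(q^{r+e-1}+1)$, and the number of generators is $N_r(r) = \qbin{r}{r}\prod_{i=1}^{r}(q^{r+e-i}+1) = \prod_{i=1}^{r}(q^{r+e-i}+1)$ using $\qbin{r}{r}=1$. The main obstacle I anticipate is twofold: first, cleanly establishing the point count $N_1(r)$ as the base of the induction without circularity, since the cone/residue description must be applied carefully at a single point and the quadratic/Hermitian/symplectic cases must all be seen to share the same parameter $e$ in the residue; and second, verifying the Gaussian-binomial index bookkeeping so that the product's running index shifts correctly from the rank-$(r-1)$ residue back to rank $r$. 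Both are routine but require attention; since the lemma is cited from \cite[Lemma 9.4.1]{BCN}, an alternative and fully rigorous route is simply to invoke that reference, with the recursion above serving as the conceptual explanation.
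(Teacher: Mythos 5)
The paper offers no proof of this lemma at all: it is quoted directly from \cite[Lemma 9.4.1]{BCN}, so there is no argument of the paper to match yours against. Your proposal is, however, a correct and natural way to actually prove the statement, and it buys self-containedness: the induction step is sound, since a $k$-space of $\cP$ through a fixed point $P$ lies in $P^\perp$ and so corresponds, modulo $P$, to a $(k-1)$-space of a polar space of the same type and rank $r-1$ (this is precisely the tangent-space/cone fact the paper records just before the lemma); the double count $N_1(r)\cdot N_{k-1}(r-1)=N_k(r)\cdot\qbin{k}{1}$ is right, the identity $\qbin{r}{k}\qbin{k}{1}=\qbin{r}{1}\qbin{r-1}{k-1}$ is standard, your index shift $(q^{(r-1)+e-(i-1)}+1)=(q^{r+e-i}+1)$ checks out, and the last two assertions are indeed just the specializations $k=1$ and $k=r$. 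The one substantive gap is the one you flag yourself: the recursion is vacuous at $k=1$ (it reads $N_1(r)=N_1(r)$), so the point count $N_1(r)=\qbin{r}{1}(q^{r+e-1}+1)$ must be established independently, and your suggestion to obtain it ``relative to the known generator count'' is circular, because the generator count is itself the case $k=r$ of the formula being proved. A self-contained treatment needs a direct count of isotropic (singular) points for each of the six families, e.g.\ by splitting off a hyperbolic plane and inducting on the rank, or by the classical counts of zeros of quadratic, Hermitian and alternating forms; this is routine but it is exactly where the type-dependent content (the value of $e$) enters. Your fallback of citing \cite[Lemma 9.4.1]{BCN} for this base case is legitimate, and citing it for the whole lemma is in effect what the paper does: the citation buys brevity and rigor, your recursion buys an explanation of where the formula comes from.
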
 

\section{Generalized Ovoids in Polar Spaces} \label{sec:defs}

In the following we formally define generalized ovoids for
polar spaces. First note that polar spaces are a $q$-analog
of the hypercube. For this, the following table for translating
between $\{ 0, 1 \}^r$ and a polar space of rank $r$ is helpful.

{
\medskip 
\begin{tabular}{l|l}
    \bf Hypercube                & \bf Polar Space \\ \hline 
    dimension $r$                & rank $r$ \\
    point $x \in \{ 0, 1 \}^r$   & generator \\
    subcube of dimension $r-1$   & point \\
    subcube of dimension $r-2$   & line \\
    subcube of dimension $r-d$   & subspace of rank $d$\\
    union of subcubes $A_i$      & union of all generators which contain one $A_i$\\ \hline
    $2^r$ points  & $\prod_{i=1}^r (q^{i+e-1}+1)$ generators\\
    $2r$ subcubes of dimension $r-1$  & $(q^{r+e-1}+1) \gauss{r}{1}$ points
\end{tabular}
}

\medskip

Note that for $q=1$ the last two counts are the same in both columns,
illustrating how the word $q$-analog is justified.
Of course this is well-known: In terms of distance-regular graphs,
the known families of distance-regular graphs with classical parameters $(r, q, 1, q^e+1)$
are the hypercubes for $q=1$. Furthermore, they are the dual polar graphs for $q$ a prime power, which
are pseudo $D_r(q)$ graphs, see \cite{BCN}. In terms of diagram geometry or
building theory, the Coxeter-Dynkin diagram of a hypercube is $B_r = C_r$.

\begin{definition}
    A \textit{partial generalized ovoid} $\cO$ of a polar space $\cP$ is
    a set of totally isotropic (nontrivial) subspaces of $\cP$ such that each
    generator contains at most one element of $\cO$.
\end{definition}

\begin{definition}
    A \textit{generalized ovoid} $\cO$ of a polar space $\cP$ is 
    a set of totally isotropic (nontrivial) subspaces of $\cP$ such that each 
    generator contains precisely one element of $\cO$.
\end{definition}

Note that unlike ovoids, generalized ovoids always exist, for instance
the set of all generators is a generalized ovoid. For consistency
with the definitions in \cite{Irrsubcubepart}, one might want to
allow $|\cO|=1$ with $\cO$'s only element being the trivial subspace
(as each subspace is incident with the trivial subspace),
but here we will exclude it.
% Or maybe, alternatively, in a more generalized theory of tight 
% partitions, $\{{*}^n\}$ should not count as a (proper) partition of the hypercube. 

\begin{definition}
   A generalized ovoid $\cO$ is \textit{reducible} 
   if there exists a subset $\cO' \subseteq \cO$,
   where $|\cO'| \geq 2$, such that the union 
   of all generators which contain one element
   of $\cO'$, have a non-trivial subspace $\pi$
   as their intersection.
\end{definition}

That is, if $\cO$ is reducible, then we can replace the 
subspaces in $\cO'$ by $\pi$ and obtain a smaller generalized ovoid.

\begin{example}
    Suppose that a polar space $\cP$ possesses an ovoid $\cO$.
    Replace one point $P$ of $\cO$ by a set of 
    lines $\cL$ of $\cP$ such that $\cL$
    corresponds to an ovoid of $\cP$ in the quotient
    of $P$. Then the new generalized ovoid is reducible. 
%     \JD{More in general: Let $\cO$ be a generalized ovoid  of a polar space $\cP$, and let $\tau \in \cO$ be a subspace with $\dim(\tau)=t$. 
%     Now, replace $\tau$ in $\cO$ by a set of 
%     $(t+1)$-spaces $\cL$ of $\cP$ such that $\cL$
%     corresponds to an ovoid\footnote{Here we suppose that this exist.} in the quotient 
%     of $\tau$. Then the new generalized ovoid is reducible.}
%     \JD{Now, maybe your first example is more clear/accessible.}
\end{example}

\subsection{Homogeneous Generalized Ovoids}

A particular case occurs when all the elements 
of a generalized ovoid have the same dimension.
Particularly, classical ovoids have this property.

\begin{definition}
    A generalized ovoid $\cO$ of a rank $r$ polar space  is
    \textit{homogeneous} if all its 
    elements have the same (algebraic) dimension $k$. In this case
    we call $\cO$ an $(r,k)$-ovoid.
\end{definition}

\begin{lemma}\label{lem:nrinovoid}
    Let $\O$ be an $(r,k)$-ovoid in a non-degenerate polar space $\mathcal{P}$ of rank $r$ and type $e$, then $|\O| = \prod_{i=1}^k (q^{r+e-i}+1)$.
\end{lemma}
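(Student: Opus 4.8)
The plan is to count incidences between the elements of the $(r,k)$-ovoid $\O$ and the generators of $\mathcal{P}$, using the defining property that each generator contains \emph{exactly one} member of $\O$. This is the standard double-counting strategy for such exact-cover statements, and the homogeneity assumption (all elements are $k$-spaces) is exactly what makes the count uniform.

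\medskip

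First I would compute, for a fixed $k$-space $\pi$ on $\mathcal{P}$, the number of generators of $\mathcal{P}$ that contain $\pi$. By the structure described just before the statement, the generators through $\pi$ correspond to the generators of the residual (quotient) polar space $\cP'$ living in $T_{\pi}(\cP)\cap\cP$; this residue is a polar space of the \emph{same type} (hence same parameter $e$) as $\cP$ but of rank $r-k$. Applying Lemma \ref{lem:numberofkinpolar} (or rather its ``number of generators'' special case) to $\cP'$, the number of generators through $\pi$ is
\[
N_\pi = \prod_{i=1}^{r-k}\bigl(q^{(r-k)+e-i}+1\bigr).
\]
Crucially this value is independent of the chosen $k$-space $\pi$, so every element of $\O$ lies in the same number $N_\pi$ of generators.

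\medskip

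Next I would set up the double count. Since each generator of $\mathcal{P}$ contains precisely one element of $\O$, summing over generators gives the total number of generators of $\mathcal{P}$, namely $\prod_{i=1}^{r}(q^{r+e-i}+1)$ by Lemma \ref{lem:numberofkinpolar}. On the other hand, summing over the elements of $\O$ (each contributing $N_\pi$ generators, and with no generator double-counted precisely because the containment is unique) gives $|\O|\cdot N_\pi$. Equating,
\[
|\O|\cdot \prod_{i=1}^{r-k}\bigl(q^{r-k+e-i}+1\bigr) = \prod_{i=1}^{r}\bigl(q^{r+e-i}+1\bigr).
\]
Finally I would solve for $|\O|$ by reindexing the right-hand product. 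Writing $j=i-k$ shows that the factors of the denominator product match the factors $i=k+1,\dots,r$ of the numerator product, leaving exactly the factors $i=1,\dots,k$, i.e.\ $\prod_{i=1}^{k}(q^{r+e-i}+1)$, which is the claimed value of $|\O|$.

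\medskip

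I expect the only genuine step requiring care to be the justification that the generators through a fixed $k$-space $\pi$ are counted by the residual polar space of rank $r-k$ and the same parameter $e$; this rests on the tangent-space/quotient description $T_{\pi}(\cP)\cap\cP$ being a cone with vertex $\pi$ over a polar space of the same type, as recalled in the Polar Spaces section. Everything else is an elementary reindexing of Gaussian-binomial and product factors, so no real obstacle remains once the residue count is in place.
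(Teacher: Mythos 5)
Your proposal is correct and follows essentially the same route as the paper: a double count of the incidences between elements of $\O$ and generators, using that a $k$-space of $\cP$ lies on $\prod_{i=1}^{r-k}(q^{r-k+e-i}+1)$ generators (via the rank-$(r-k)$ residue of the same type $e$) and that $\cP$ has $\prod_{i=1}^{r}(q^{r+e-i}+1)$ generators in total. The paper states this in one sentence; you have merely spelled out the residue justification and the reindexing, both of which are correct.
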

\begin{proof}
    Since there are $\prod_{i=1}^{r}(q^{r+e-i}+1)$ generators in $\P$, 
    and $\prod_{i=1}^{r-k}(q^{r-k+e-i}+1)$ generators through a
    $k$-space in $\P$, we have that the lemma follows from a 
    double counting of the couples 
    $\{(\alpha, \pi) \,|\, \alpha \in \O, \alpha \subset \pi, \pi\in \mathcal{P}, \dim(\pi)=r\}$.
\end{proof}

For hypercubes, a notion of tightness  is necessary as otherwise 
subcubes of arbitrary small size exist. In particular,
small dimensional examples might hide in high dimension. For 
polar spaces, this is not the case. Let us briefly justify this.
For example, a partition of a hypercube $\{0, 1\}^r$ into subcubes of dimension
$r-k$ has always size $2^k$. For a polar space, a set of totally
isotropic $k$-spaces such that each generator contains precisely
one $k$-space, has size $\prod_{i=1}^k (q^{r+e-i}+1)$ for 
some $e \in \{ 0, 1/2, 1, 3/2, 2 \}$. Note that this corresponds 
to $2^k$ and is independent of $r$ for $q=1$, while for $q$ 
a prime power the definition of the size depends on $r$.

\begin{lemma}\label{lemmaquotient}
If there exist an $(r,k)$-ovoid in $\cP$ with rank $r$ and parameter $e$, then there exist an $(r-1,k)$-ovoid in a polar space $\cP'$ of rank $r-1$ and parameter $e$.
% \JD{Tight? Irreducible?  They do not follow immediately?
% If $\O$ is reducible, then $\O'$ will also be reducible. 
% If $\O$ is non-tight, and $\dim(\langle \O\rangle)<\dim(\langle \P\rangle)-1$ then $\O'$ is also non-tight. 
% }
\end{lemma}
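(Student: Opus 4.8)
The plan is to realise $\cP'$ as the quotient polar space at a cleverly chosen point and to transport $\cO$ through this quotient. Concretely, I would first pick a point $P$ of $\cP$ that is contained in \emph{no} member of $\cO$, and let $\cP'$ be the polar space induced on $P^{\perp}/P$. Applying the cone description of a tangent hyperplane recalled above to the point $\pi=P$ (so $t=1$), $P^{\perp}\cap\cP=T_{P}(\cP)\cap\cP$ is a cone with vertex $P$ over a polar space of the same type as $\cP$ of rank $r-1$; this base is exactly $\cP'=P^{\perp}/P$, so $\cP'$ has rank $r-1$ and the same parameter $e$. Its generators are precisely the subspaces $G/P$ with $G$ a generator of $\cP$ through $P$.

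Next I would define the candidate ovoid and check the axioms. For each member $\alpha\in\cO$ with $\alpha\subseteq P^{\perp}$, the span $\langle\alpha,P\rangle$ is totally isotropic and contains $P$ (and $P\notin\alpha$ by the choice of $P$), so $\langle\alpha,P\rangle/P$ is a $k$-space of $\cP'$. Put $\cO'=\{\langle\alpha,P\rangle/P : \alpha\in\cO,\ \alpha\subseteq P^{\perp}\}$. To see that $\cO'$ is an $(r-1,k)$-ovoid, take any generator $\bar G=G/P$ of $\cP'$. Then $G$ contains a unique member $\alpha_{G}\in\cO$; since $G\subseteq P^{\perp}$ we have $\alpha_{G}\subseteq P^{\perp}$, and $P\notin\alpha_{G}$ by the choice of $P$, so $\langle\alpha_{G},P\rangle/P\in\cO'$ lies in $\bar G$. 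Conversely, if $\langle\alpha,P\rangle/P\subseteq\bar G$ with $\alpha\in\cO$ and $\alpha\subseteq P^{\perp}$, then $\alpha\subseteq G$, whence $\alpha=\alpha_{G}$ by uniqueness of the member in $G$. Thus $\bar G$ contains exactly one element of $\cO'$, and every element of $\cO'$ is a $k$-space, so $\cO'$ is homogeneous of dimension $k$, as required.

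The hard part will be the very first step: producing a point $P$ of $\cP$ that lies on none of the members of $\cO$. For $k=1$ this is immediate, since the members are then $|\cO|=q^{r+e-1}+1$ points while $\cP$ has $\qbin{r}{1}(q^{r+e-1}+1)$ points, so a crude count already leaves points uncovered. For general $k$ a naive union bound over the $\qbin{k}{1}$ points of each member is too weak, as $|\cO|\cdot\qbin{k}{1}$ can exceed the number of points of $\cP$ once $k\ge 2$; this is where I expect the real obstacle to be. To overcome it I would exploit the extra structure that, through any fixed point $X$, the members of $\cO$ containing $X$ form a \emph{partial} $(r-1,k-1)$-ovoid of the quotient $X^{\perp}/X$ (each generator through $X$ carries at most one member through $X$). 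Bounding the number of such members and combining this with a count of incident (point, member) pairs should force some point to be missed, yielding the desired $P$. It is this existence statement, rather than the transport construction, on which I would concentrate the effort.
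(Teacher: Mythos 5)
Your transport construction coincides with the paper's proof: the paper likewise fixes a point $P$ of $\cP$ ``not contained in an element of $\cO$'', forms the cone $T_P(\cP)\cap\cP=\langle P,\cP'\rangle$ whose basis $\cP'$ is your quotient $P^{\perp}/P$, and sends each $\pi\in\cO$ lying in $T_P(\cP)$ to $\langle P,\pi\rangle\cap\cP'$; your existence-and-uniqueness check on generators $G/P$ is the same one-line verification made there. So on the construction side you match the paper exactly.

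The step you flag as the real obstacle is precisely the step the paper does not address at all: its proof begins ``let $P$ be a point in $\cP$, not contained in an element of $\cO$'' with no argument that such a point exists. Measured against the paper, your proposal is therefore not missing anything the paper supplies -- you have instead identified a genuine weakness in the published argument. Two warnings, however. First, the repair you sketch cannot work as stated: an upper bound on the number $m(X)$ of members through a point $X$ (via the partial $(r-1,k-1)$-ovoid structure, or via Theorem~\ref{thm:main_nonex_quant}), combined with the incidence count $\sum_X m(X)=|\cO|\qbin{k}{1}$, yields only a \emph{lower} bound on the number of covered points; to exhibit an uncovered point you need an \emph{upper} bound, i.e.\ a lower bound on the excess multiplicity of covered points, which this structure does not provide. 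And, as you note, the raw union bound fails: for $k=2$ the incidence count is roughly $q^{e}$ times the number of points of $\cP$, and for $k\geq 3$ the ratio grows exponentially in $r$. Second, the existence claim is genuinely not a counting fact: in the $q=1$ model of Section~\ref{sec:defs} it is \emph{false} -- the partition $\{00{*},\,10{*},\,{*}10,\,{*}11\}$ of $\{0,1\}^{3}$ into four $1$-dimensional subcubes is homogeneous and mentions every coordinate with both values, so the corresponding ``$(3,2)$-ovoid'' covers every ``point''; similarly, for $k=r$ the set of all generators is an $(r,r)$-ovoid covering every point of $\cP$, and the lemma's conclusion outright fails. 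Hence any proof must use $k<r$ together with structure specific to $q\geq 2$; neither your proposal nor the paper provides one, and this remains an open gap in both arguments.
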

\begin{proof}
Let $\cO$ be an $(r,k)$-ovoid in $\cP$ and let $P$ be a point in $\cP$, not contained in an element of $\cO$. Consider the tangent hyperplane $T_P(\cP)$ of $P$. We know that $T_P \cap \cP$ is the cone $\< P, \cP' \>$ with vertex the point $P$ and basis the polar space $\cP'$ of the same type as $\mathcal{P}$, but with rank $r-1$. 
For every element $\pi\in\cO\cap T_P(\cP)$, let $\pi'$ be the subspace $\langle P, \pi\rangle \cap \mathcal{P'}$, and let $\cO' = \{\pi' \,|\, \pi\in \cO\}$. 
As we know that every generator through $P$ contains an element of $\cO$, it follows that $\cO'$ is an $(r-1,k)$-ovoid in $\cP'$.
\end{proof}

\section{Examples for Generalized Ovoids} \label{sec:constr}

\subsection{Non-Homogenous Examples}

Our main concern in the non-homogenous case is the minimum size of a generalized ovoid in a given polar space. We denote the \emph{type} of a generalized ovoid $\cO$ as a sequence $1^{n_1}2^{n_2}3^{n_3}\dots r^{n_r}$ if $\cO$ consists of $n_i$ subspaces of dimension $i$.

%\JD{Below, a table of the smallest generalized ovoids for small rank $3$ polar spaces is given. The table contains the size of the smallest generalized ovoids, together with its type; which }
Here a small table of the smallest generalized ovoids
and their type is given, for small rank $3$ polar spaces.
We do not include $Q^+(5, q)$ as there ovoids exist.
% {\bf FI: In general or only for small $q$? Need to check.}

{
\medskip
\begin{tabular}{l|rr}
    Polar Space & Size & Type \\ \hline
    $W(5, 2)$ & $21$ & $1^6 2^{15}$ \\
    $Q^-(7, 2)$ & $\geq 153$ & $2^{153}$ \\
    $W(5, 3)$ & $\geq 232$ & $1^{12} 2^{140} 3^{80}$ \\
\end{tabular}
}

The homgenous example for $Q^-(7, 2)$ is described in \S\ref{sec:msystem_ex}.

%-------------------------------------------------
\subsection{Recursive construction}
We start with the following lemma, which follows immediately from the proof of Lemma \ref{lemmaquotient}.
\begin{lemma}
    Let $\cO$ be an $(r, k)$-ovoid in a polar space $\cP$,
    where $r > k \geq 2$.
    Then there exists an $(r-1, k)$-ovoid in the quotient of a point of $\cP$.
\end{lemma}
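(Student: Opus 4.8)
The plan is to observe that the rank $r-1$ polar space $\cP'$ appearing in Lemma \ref{lemmaquotient} is nothing but the quotient of the point $P$, so that the construction given there already produces the desired object; the only work is to make this identification explicit. Recall that the quotient (or residue) of a point $P$ of $\cP$ is the polar space carried by $P^\perp/P = T_P(\cP)/P$ equipped with the form induced by that of $\cP$: its points are the lines of $\cP$ through $P$, its lines are the planes through $P$, and so on, and it has rank $r-1$ and the same parameter $e$ as $\cP$. This is exactly the base $\cP'$ of the cone $T_P(\cP)\cap\cP = \< P, \cP'\>$. Indeed, choosing a vector-space complement $H$ of $P$ in $T_P(\cP)$ realizes $T_P(\cP)/P$ inside $H$, and the map $\pi \mapsto \< P, \pi\>\cap\cP'$ used in the proof of Lemma \ref{lemmaquotient} is precisely the canonical quotient map $\pi \mapsto (\pi+P)/P$ read off in $H$.

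First I would fix, as in Lemma \ref{lemmaquotient}, a point $P$ of $\cP$ lying on no element of $\cO$, and set $\cO' = \{\,(\pi+P)/P : \pi\in\cO,\ \pi\subseteq T_P(\cP)\,\}$. Since $\pi$ is totally isotropic and $\pi\subseteq P^\perp$, the space $\<P,\pi\>$ is totally isotropic, so each image is a totally isotropic subspace of the quotient; and because $P\notin\pi$, its dimension equals $k$. Here the hypothesis $r>k$ is what is really needed: it guarantees $k\leq r-1$, so that genuine $k$-spaces exist in the rank $r-1$ quotient and the images are nondegenerate members of it.

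Next I would verify the ovoid property by transporting the correspondence through the quotient. Every generator of $\cP$ through $P$ lies in $P^\perp=T_P(\cP)$ and maps bijectively to a generator of the quotient; a generator $G\ni P$ contains a unique $\pi\in\cO$, whose image then lies in the corresponding generator of $\cP'$. To see that the count stays exact, I would check injectivity: if two distinct $\pi_1,\pi_2\in\cO$ contained in $T_P(\cP)$ had the same image, then $\pi_1+P=\pi_2+P$ would be a totally isotropic $(k+1)$-space containing both, hence would extend to a generator meeting $\cO$ twice, contradicting the ovoid property. Thus each generator of the quotient contains precisely one element of $\cO'$, and $\cO'$ is an $(r-1,k)$-ovoid in the quotient of $P$. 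The one nonformal ingredient, inherited unchanged from Lemma \ref{lemmaquotient}, is the existence of a point $P$ off every element of $\cO$; everything else is a direct transcription of the earlier argument into the language of the residue $P^\perp/P$.
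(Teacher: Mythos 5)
Your proof is correct and takes essentially the same route as the paper: the paper derives this lemma directly from the proof of Lemma \ref{lemmaquotient}, the point being exactly the identification you make explicit, namely that the base $\cP'$ of the cone $T_P(\cP)\cap\cP$ \emph{is} the quotient $P^\perp/P$, so the map $\pi\mapsto\langle P,\pi\rangle\cap\cP'$ is the canonical quotient map. Your extra verifications (total isotropy, dimension, injectivity, and the counting through the bijection $G\leftrightarrow G/P$) only make rigorous what the paper leaves implicit, and you correctly flag the one assumption shared with the paper's own argument, the existence of a point $P$ lying on no element of $\cO$, which neither proof justifies.
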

%\begin{proof}
 %   \JD{Is this proof necessary? I think it follows immediately from the proof of Lemma \ref{lemmaquotient}?}
  %  Let $P$ be a point of $\cP$ which does not lie on an element of $\cO$.
   % Then the generators of $\cP$ through $P$ contain precisely on element of $\cO$. Hence, $\{ \< P, S \>/P: S \in P^\perp, S \in \cO \}$ is a generalized $(r-1, k)$-ovoid of the residue of $P$.
%\end{proof}

\begin{lemma}
    Let $\mathcal{O}$ be an $(r,l)$-ovoid in a non-degenerate polar space $\mathcal{P}$ of rank $r$ and parameter $e$, and let $\mathcal{O}'$ be an $(r-l, k)$-ovoid in a polar space $\mathcal{P'}$ of rank $r-l$ and parameter $e$. Then there exist an $(r, k+l)$-ovoid in $\mathcal{P}$.
%
%     Moreover, if $\mathcal{O}$ is tight, then the new $(r,k+l)$-ovoid is also tight.
    %Furthermore, if there is one $(r-l,k)$-ovoid in an embedded polar space $\mathcal{P'}$, which is tight, then we also know that the new $(r,k+l)$-ovoid is tight.
\end{lemma}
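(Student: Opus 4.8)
The plan is to build the desired ovoid by stacking a copy of $\mathcal{O}'$ on top of each element of $\mathcal{O}$, working inside the appropriate quotient geometry. First I would fix an element $\pi \in \mathcal{O}$, a totally isotropic $l$-space, and pass to the quotient determined by its tangent space. By the cone structure recalled in Section 2, $T_\pi(\mathcal{P}) \cap \mathcal{P}$ is a cone with vertex $\pi$ over a polar space $\mathcal{P}_\pi$ of rank $r-l$ and the same parameter $e$. Equivalently, the map $\tau \mapsto \langle \pi, \tau \rangle$ sets up an incidence-preserving bijection between the totally isotropic subspaces of $\mathcal{P}_\pi$ and the totally isotropic subspaces of $\mathcal{P}$ containing $\pi$, under which dimensions increase by exactly $l$. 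In particular, the generators of $\mathcal{P}$ through $\pi$ correspond to the generators of $\mathcal{P}_\pi$, and the $k$-spaces of $\mathcal{P}_\pi$ correspond to the $(k+l)$-spaces of $\mathcal{P}$ through $\pi$.

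Since $\mathcal{P}_\pi$ is a polar space of rank $r-l$ and parameter $e$, it is isomorphic to $\mathcal{P}'$ and therefore carries an $(r-l,k)$-ovoid, namely a transported copy of $\mathcal{O}'$ which I call $\mathcal{O}'_\pi$. Pulling $\mathcal{O}'_\pi$ back through the quotient correspondence gives a set $\mathcal{O}''_\pi$ of $(k+l)$-spaces through $\pi$, each of the form $\langle \pi, \tau \rangle$ with $\tau$ a $k$-space of $\mathcal{O}'_\pi$. I would then set $\mathcal{O}'' = \bigcup_{\pi \in \mathcal{O}} \mathcal{O}''_\pi$ and claim that $\mathcal{O}''$ is the required $(r,k+l)$-ovoid.

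To verify the claim I take an arbitrary generator $G$ of $\mathcal{P}$. Because $\mathcal{O}$ is an $(r,l)$-ovoid, $G$ contains exactly one element $\pi$ of $\mathcal{O}$. Any element of $\mathcal{O}''$ lying in $G$ contains its own vertex $\pi_0 \in \mathcal{O}$ with $\pi_0 \subseteq G$, and by uniqueness $\pi_0 = \pi$; hence every contribution to $G$ comes from $\mathcal{O}''_\pi$. Since $G$ passes through $\pi$, its image in the quotient is a generator of $\mathcal{P}_\pi$, and as $\mathcal{O}'_\pi$ is an $(r-l,k)$-ovoid this image contains exactly one of its $k$-spaces. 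Translating back, $G$ contains precisely one member of $\mathcal{O}''_\pi$, hence precisely one member of $\mathcal{O}''$.

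The step demanding the most care is the quotient correspondence: one must confirm that passing to the quotient at $\pi$ is simultaneously compatible with the incidence ``contains $\pi$'' and with the dimension shift by $l$, and that the transported $\mathcal{O}'_\pi$ genuinely is an $(r-l,k)$-ovoid, which is exactly where the isomorphism of $\mathcal{P}_\pi$ with $\mathcal{P}'$ (same type and rank) is invoked. As a consistency check, Lemma \ref{lem:nrinovoid} yields $|\mathcal{O}''| = |\mathcal{O}|\cdot|\mathcal{O}'| = \prod_{i=1}^{l}(q^{r+e-i}+1)\prod_{i=1}^{k}(q^{r-l+e-i}+1) = \prod_{i=1}^{k+l}(q^{r+e-i}+1)$, which is precisely the size an $(r,k+l)$-ovoid must have.
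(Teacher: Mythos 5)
Your construction coincides with the paper's: for each $\pi\in\mathcal{O}$ you place a copy of $\mathcal{O}'$ in the base of the cone $T_\pi(\mathcal{P})\cap\mathcal{P}$ and take the union of the lifted $(k+l)$-spaces $\langle\pi,\tau\rangle$. The difference lies in the verification, and it is a genuine (if modest) one. The paper makes Lemma~\ref{lem:nrinovoid} load-bearing: it computes $|\mathcal{O}''|=|\mathcal{O}|\cdot|\mathcal{O}_\alpha|$, observes that this equals the size an $(r,k+l)$-ovoid must have, and is then left to show only that each generator contains \emph{at most} one element of $\mathcal{O}''$, which it does by contradiction (two such elements in a generator would share the same vertex $\alpha_\pi$, producing two elements of $\mathcal{O}_{\alpha_\pi}$ in a single generator of the quotient). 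You instead prove \emph{exactly one} directly: uniqueness of the vertex $\pi$ inside $G$ confines all candidates to $\mathcal{O}''_\pi$, and the incidence- and dimension-preserving bijection between totally isotropic subspaces through $\pi$ and those of $\mathcal{P}_\pi$ converts the claim into the defining property of $\mathcal{O}'_\pi$ applied to the generator $G/\pi$; the size count is demoted to a consistency check. Your route is more self-contained and delivers the stronger ``exactly one'' statement without any counting, while the paper's route trades that for a shorter contradiction argument. One caveat you share with the paper: the cone base has the same \emph{type} as $\mathcal{P}$, not merely the same rank and parameter, and rank plus parameter does not determine the type ($W(2r'-1,q)$ and $Q(2r',q)$ both have $e=1$ yet are non-isomorphic for $q$ odd), so the transported copy $\mathcal{O}'_\pi$ exists only under the intended reading that $\mathcal{P}'$ is of the same type as $\mathcal{P}$ --- a point you at least flag when you invoke ``same type and rank.''
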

\begin{proof}
    Let $\mathcal{O}$ be an $(r,l)$-ovoid in $\mathcal{P}$. For every $l$-space $\alpha$ in $\O$, we consider its tangent space $T_{\alpha}(\mathcal{P})$, which is a cone with vertex $\alpha$, and basis a non-degenerate polar space $\mathcal{P'}$ of rank $r-l$ and type $e$. In this polar space $\mathcal{P'}$, we take an $(r-l, k)$-ovoid $\O'$, and let $\O_{\alpha}$ the set of all $(k+l)$-spaces $\{\langle \alpha, \tau\rangle |\tau \in \O'\}$ in $\mathcal{P}$. 
    Now we prove that the set $\O'' = \bigcup_{\alpha\in\O} \O_{\alpha} $ of $(k+l)$-spaces is an $(r, k+l)$-ovoid in $\mathcal{P}$.
    First note that $|\O''| = |\O|\cdot |\O_{\alpha}|$. By Lemma \ref{lem:nrinovoid}, we see that $\O''$ has the size of an $(r, k+l)$-ovoid, and hence, it is sufficient to prove that every generator of $\P$ contains at most one element of $\O''$. Suppose there is a generator $\pi$ containing two elements $\beta_1, \beta_2$ of $\O''$. Let $\alpha_\pi$ be the (unique) element of $\O$ contained in $\pi$. By the construction of $\O''$, we know that $\alpha_{\pi}\subset \beta_1\cap \beta_2$. Let $\P_{\alpha}$ a polar space of rank $r-l$ in the quotient of $\alpha_{\pi}$. Then we know that $\beta_1\cap \P_{\alpha}$ and $\beta_2\cap \P_{\alpha}$ are contained in $\O_{\alpha}$, which gives a contradiction, since the generator $\pi\cap \P_{\alpha}$ contains two elements of $\O_{\alpha}$.
\end{proof}
\begin{remark}
\begin{enumerate}
    \item Note that the $(r,k+l)$-ovoid constructed in the previous lemma is reducible, since the set of all generators containing an element of $\cO_{\alpha}$ contains the subspace $\alpha$.
    \item We can generalize this construction for non-homogeneous ovoids:

    Let $\mathcal{P}$ be a polar space, and let $\mathcal{O}$ be a generalized ovoid. Then, for every element $\pi$ in $\mathcal{O}$, we investigate its quotient space, and in this quotient space, we take a generalized ovoid $\mathcal{O}_{\pi}$. Now, Let $\mathcal{F}_{\pi}=\{\langle \pi, \tau\rangle | \tau \in \mathcal{O}_\pi\}$. Then $\bigcup_{\pi\in \mathcal{O}} \mathcal{F}_{\pi}$ is another generalized ovoid in $\mathcal{P}$.
\end{enumerate}  
\end{remark}

\subsection{Examples of \texorpdfstring{$(r,r-1)$}{(r, r-1)}-ovoids in \texorpdfstring{$Q^+(2r-1, q)$, $H(2r-1, q)$, and $Q(2r, q)$}%
{Q+(2r-1, q), H(2r-1,q), and Q(2r, q)}}

Let $\cP$ be one of $Q^+(2r-1, q)$, $H(2r-1, q)$, and $Q(2r, q)$, respectively,
with corresponding parameter $e \in \{ 0, 1/2, 1 \}$.
% Note that a $(r, r-1)$-ovoid of $\cP$ has size
% \[
%  \prod_{i=1}^{r-1} (q^{r+e-i}+1)
% \]
% if it exists.
Let $\cP'$ be one of $Q(2r-2, q)$, $H(2r-2, q)$, and $Q^-(2r-1, q)$, respectively.
Then the generators of $\cP'$ have rank $r-1$ and each generator
of $\cP$ contains precisely one generator of $\cP'$.
Hence, $\cP'$ is an $(r, r-1)$-ovoid.

\begin{Proposition}
  The number of pairwise non-isomorphic $(r, r-1)$-ovoids in $\cP$ is
  at least the number of pairwise non-isomorphic partial ovoids
  with at most $X \leq q^{r+e-1}/5$
  %$q^{r+e-2}$ 
  elements in $\cP'$.
\end{Proposition}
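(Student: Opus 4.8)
The plan is to define an injection from isomorphism classes of partial ovoids of $\cP'$ of size at most $X$ into isomorphism classes of $(r,r-1)$-ovoids of $\cP$. I would start from the standard $(r,r-1)$-ovoid $\cO_0$ consisting of all generators of $\cP'$, and modify it locally at the points of a given partial ovoid $S$. Concretely, for a point $P\in S$ I pass to the quotient polar space $\cP/P=P^\perp/P$, which has rank $r-1$ and the same type as $\cP$; by Lemma \ref{lemmaquotient} the generators of $\cP'$ through $P$ descend to the generators of a rank-$(r-2)$ sub-polar space $\cP'/P$ of $\cP/P$, and these form an $(r-1,r-2)$-ovoid of $\cP/P$ exactly as in the construction opening this subsection. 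I replace this by the generators of another sub-polar space $\cR_P$ of $\cP/P$ of the same type as $\cP'/P$ (a different non-tangent section), chosen so that $\cR_P$ and $\cP'/P$ share no generator; lifting these $(r-2)$-spaces back through $P$ yields $(r-1)$-spaces of $\cP$ through $P$. Performing this replacement at every $P\in S$, and keeping every generator of $\cP'$ that passes through no point of $S$, produces a family $\cO_S$.

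First I would verify that $\cO_S$ is a genuine $(r,r-1)$-ovoid. The key point is that the modifications at distinct points of $S$ do not interfere: if a generator $g$ of $\cP$ contained two points $P,Q\in S$, then $g\cap\cP'$ would be a generator of $\cP'$ through both $P$ and $Q$, contradicting that $S$ is a partial ovoid; hence every generator of $\cP$ passes through at most one point of $S$. Consequently the generators of $\cP$ through the various points of $S$ form pairwise disjoint sets, and a generator of $\cP'$ lying in $\cO_0$ and passing through no point of $S$ is contained in no generator of $\cP$ through a point of $S$. Thus each generator of $\cP$ contains exactly one element of $\cO_S$: either its unchanged $\cP'$-section, or, when it meets the unique point $P\in S$ it contains, the lifted generator of $\cR_P$ it contains. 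A size count against Lemma \ref{lem:nrinovoid} confirms that $\cO_S$ has the correct cardinality.

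Next I would show that the assignment $S\mapsto \cO_S$ is injective on isomorphism classes, which gives the stated inequality. For this I want to recover both $\cP'$ and $S$ from $\cO_S$ in a collineation-invariant way. Since $|S|\le X$, only a small fraction of the elements of $\cO_S$ are switched, so $\cP'$ should be characterizable intrinsically as the unique rank-$(r-1)$ sub-polar space of $\cP$ containing all but a minority of the elements of $\cO_S$; the switched locus $S$ is then exactly the set of points of $\cP'$ at which $\cO_S$ contains no generator of $\cP'$. If $\sigma\in\mathrm{Aut}(\cP)$ satisfies $\sigma(\cO_{S_1})=\cO_{S_2}$, then $\sigma$ must carry this recovered sub-polar space to itself and $S_1$ to $S_2$, so $\sigma$ restricts to an isomorphism of the partial ovoids $S_1$ and $S_2$. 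Hence non-isomorphic partial ovoids yield non-isomorphic generalized ovoids.

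The hard part will be making this recovery rigorous, and this is exactly where the hypothesis $X\le q^{r+e-1}/5$ is used. Note that $q^{r+e-1}+1$ is precisely the size of an ovoid of $\cP'$, so the bound forces $S$ to be sparse, at most about one fifth of an ovoid, guaranteeing that the switched elements are too few to be accounted for by any sub-polar space other than $\cP'$; this is what makes $\cP'$ the unique dominant sub-polar space and renders the characterization above collineation-invariant. The remaining and more routine points are the existence of a distinct section $\cR_P$ meeting $\cP'/P$ in no generator (available since $\cP/P$ contains many non-tangent sections once $r$ is not too small) and the check that the switched $(r-1)$-spaces are genuinely not generators of $\cP'$, so that the deviation locus is detected cleanly.
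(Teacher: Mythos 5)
Your construction and recovery strategy are the same as the paper's (switch the generators of $\cP'$ at each point of a partial ovoid to those of a second non-degenerate section of the quotient, use pairwise non-collinearity so the switches do not interfere, and use $X \leq q^{r+e-1}/5$ to recover the hyperplane of $\cP'$ as the unique one containing at least $4/5$ of the elements), but one step you dismiss as routine is in fact impossible in two of the three cases. You require $\cR_P$ to share \emph{no} generator with $\cP'/P$. For $\cP = Q(2r,q)$, $\cP' = Q^-(2r-1,q)$, the quotient at $P$ is $\cQ = Q(2r-2,q)$ and the sections are elliptic; the intersection of two such sections is a codimension-$2$ section of $\cQ$ lying inside the rank-$(r-2)$ space $\cQ' = \cP'/P$, and a rank count shows it is either a non-degenerate $Q(2r-4,q)$ (rank $r-2$) or a cone with point vertex over $Q^-(2r-5,q)$ (maximal totally isotropic subspaces of dimension $1+(r-3) = r-2$); either way it contains common generators of the two sections. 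The smallest case is vivid: two elliptic quadrics $Q^-(3,q)$ arising as hyperplane sections of $Q(4,q)$ always meet, since anisotropic quadratic forms over $\Ff_q$ have dimension at most $2$, so every plane of $\PG(4,q)$ meets $Q(4,q)$; their common points are common generators. The Hermitian case fails identically (two Hermitian curves $H(2,q)$ on $H(3,q)$ always meet, as $H(3,q)$ has no external lines). Only for $\cP = Q^+(2r-1,q)$ can your disjointness be arranged, by choosing two parabolic sections meeting in an elliptic $Q^-(2r-5,q)$ of rank $r-3$. Indeed, the paper's own counting goes the opposite way: it asserts that $\cQ' \cap \cQ''$ is a polar space of rank $r-2$ and parameter $e$, i.e.\ that the two sections \emph{do} overlap in that many generators, and subtracts them.

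This matters because your injectivity argument leans on the disjointness: you recover $S$ as the set of points of $\cP'$ through which $\cO_S$ contains no generator of $\cP'$, which is only valid if every generator of $\cP'$ through $P \in S$ really disappears. The fix is the paper's route: allow $\cR_P$ and $\cP'/P$ to share generators (the shared ones are simply kept, and the exactness check you give is unaffected); the fraction of switched elements is still at most $X/(q^{r+e-1}+1) \leq 1/5$, so the hyperplane $H_0$ spanned by $\cP'$ is still recoverable as the unique hyperplane containing at least $4/5$ of $\cO_S$. Then recover $S$ not by missing generators but from the elements of $\cO_S$ \emph{not} contained in $H_0$: these are exactly the switched elements, and each passes through precisely one point of $S$ (a totally isotropic subspace through two points of $S$ would make them collinear in $\cP'$, contradicting the partial ovoid property), while every point of $S$ lies on many of them. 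This identifies $S$ collineation-invariantly without ever requiring the switched section to be generator-disjoint from $\cP'/P$, and it is also more robust than your criterion, which could in principle misclassify a point $P_0 \notin S$ all of whose generators happen to pass through points of $S$.
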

\begin{proof}
    We know that the generators of $\cP'$ form an $(r, r-1)$-ovoid $\cO$.
    Let $\cR$ be a partial ovoid of $\cP'$.
    Construct a new $(r, r-1)$-ovoid $\cO'$ by
    repeating the following for each point $P$ in $\cR$:

    Consider the quotient space $\cQ$ of $P$.
    This is a polar space of the same type as $\cP$
    and rank $r-1$. In $\cQ$, the elements of $\cP'$ through
    $P$ correspond to a polar space $\cQ'$ of the same
    type as $\cP'$ and rank $r-2$.
    Let $\cQ''$ in $\cQ$ be isomorphic with $\cQ'$,
    but with $\cQ' \neq \cQ''$. Replace
    all $(r-1)$-spaces $S$ through $P$ with $S/P \in \cQ'$
    by all $(r-1)$-spaces $S$ through $P$ with $S/P \in \cQ''$.

    The resulting set $\cO'$ is still an $(r, r-1)$-ovoid
    as each generator through $P$ contains precisely one of the
    generators of $\cQ''$. The fact that $\cR$ is a
    partial ovoid guarantees that we can do this independently
    for all $P$ in $\cR$.

    For two non-isomorphic choices of $\cR$,
    the resulting $(r, r-1)$-ovoids must be non-isomorphic. First note that for each point in the partial ovoid $\cR$ in $\cP'$ we remove at most $\prod_{i=1}^{r-2}(q^{r-1+e-i}+1)-\prod_{i=1}^{r-2}(q^{r-2+e-i}+1)$ elements of $\cP'$, which is the number of generators in the polar space $\cQ'$ 
    %\JD{I think this $\cP''$ is equal to $\cQ'$, since $\cQ'\subset \cP'$?} 
    of rank $r-2$ and parameter $e+1$, minus the number of generators in the polar space $\cP'' = \cQ'\cap \cQ''$ of rank $r-2$ and parameter $e$, see Lemma \ref{lem:numberofkinpolar}. Since the partial ovoid of $\cP'$ has size at most $X$, we find that the
    procedure above removes at most
    \begin{align*}
        \frac{X\left(\prod_{i=1}^{r-2}(q^{r-1+e-i}+1)-\prod_{i=1}^{r-2}(q^{r-2+e-i}+1)\right)}{\prod_{i=1}^{r-1}(q^{r+e-i}+1)} = \frac{X(q^{r-2}-1)q^e}{(q^{r+e-2}+1)(q^{r+e-1}+1)}
    \end{align*}
    %\JD{I think that in the nominator it should be $\prod_{i=1}^{r-1}(q^{r-1+e+1-i}+1)$. Since the parameter of $\cP'$ is $e+1$? Please check.  }
    of the generators of $\cP'$ from $\cO$. 
    For $X \leq q^{r+e-1}/5$ this
%     For $X\leq \frac{(q^{r+e-1}+1)(q^{r+e-2}+1)(q^e+1)}{(q^{r-2}-1)q^{e+1}}$ this
%     \footnote{if we do not want this ugly expression, then we can say that $X\leq q^{r+2e-2}+1$. Note that for $e=1$ we can take any partial ovoid.} 
    fraction is at most
%     \footnote{Do we need this fraction, or is another larger fraction also sufficient?} 
    $\frac{1}{5}$. Hence, the hyperplane containing 
    $\cP'$ is the unique hyperplane of the ambient vector space 
    which contains at least $\frac45$ of the elements of $\cO$.
    Hence, we can
    reconstruct $\cP'$ from $\cO'$. Hence,
    we can reconstruct $\cR$ from $\cO'$.
    Hence, non-isomorphic partial ovoids
    of $\cP'$ yield non-isomorphic $(r, r-1)$-ovoids.
\end{proof}

\subsection{Examples of \texorpdfstring{$(r,r-1)$}{(r, r-1)}-ovoids in \texorpdfstring{$Q^+(2r-1, q)$}{Q+(2r-1, q)}} \label{sec:Qplus_ex}

We want to find a set $Y$ of comaximal subspaces of $Q^+(2r-1, q)$ 
such that each maximal subspace contains precisely one element of $Y$.
As there are two types of maximal subspaces and each comaximal
subspace lies in one of each type, we are simply asking for a perfect 
matching in the (bipartite) graph of maximal subspaces of $Q^+(2r-1, q)$,
two adjacent if they meet in a comaximal subspace.
The graph has $2v := 2 \prod_{i=1}^{r-1} (q^i+1)$ vertices 
and degree $k := (q^r-1)/(q-1)$. It is easy to say that such perfect matchings
exist using Hall's marriage theorem. More precisely,
a result by Schrijver \cite{Schrijver1998} shows that a bipartite graph on $2v$ vertices 
and degree $k$ has at least 
\[
 \left( \frac{(k-1)^{k-1}}{k^{k-2}} \right)^v
\]
perfect matchings. In our case this is at least 
\[
    %\left( \frac{(k-1)^{k-1}}{k^{k-2}} \right)^v = 
    \left(\frac{k^2}{k-1}\left(1-\frac{1}{k}\right)^k\right)^v\geq \left(\frac{k^2}{k-1} \cdot \frac{1}{e+1}\right)^v \geq \left(\frac{q^{(r-1)} }{e+1} \right)^v\geq \left(\frac{q^{(r-1)} }{e+1} \right)^{q^{\binom{r}{2}}}.
\]
Hence, it is clear that $(r, r-1)$-ovoids are plentiful
and a classification is impossible. It is clear that
almost all of these $(r, r-1)$-ovoids are not contained in a hyperplane
(as there are far fewer hyperplanes). We also believe
that almost all of them are irreducible, but we lack 
a proof.

% {\bf FI: Can we find a proof for irreducibility?}

\subsection{Example of \texorpdfstring{$(3,2)$}{(3, 2)}-ovoid in \texorpdfstring{$Q^+(5,q)$}{Q+(5, q)}}
Let $Q=Q^+(5,q)$ be the non-degenerate hyperbolic quadric in $\PG(5,q)$, with polarity $\perp$, and let $\ell$ be a line in $\PG(5,q)$, disjoint from $Q$. 
It is known that $\ell^\perp\cap Q$ is a non-degenerate elliptic quadric $Q_3 = Q^-(3,q)$.

\begin{lemma}\label{lem:planeintersectQ}
    Let $P$ be a point in $\ell^\perp$. If $P\in Q_3$, then $\langle P,\ell\rangle \cap Q = \{P\}$, and if  $P\notin Q_3$, then $\langle P,\ell\rangle \cap Q$ is a conic $Q(2,q)$.
\end{lemma}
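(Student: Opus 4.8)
The plan is to work directly with the quadratic form $Q$ underlying the quadric and to reduce everything to the restriction of $Q$ to the plane $\pi := \langle P, \ell\rangle$. First I would check that $\pi$ really is a plane: since $\ell$ is disjoint from $Q$, the form restricted to the $2$-dimensional space $\ell$ is anisotropic and hence non-degenerate, so the radical of $f|_{\ell}$ is trivial and $\ell \cap \ell^{\perp} = 0$. As $P \in \ell^{\perp}$ this forces $P \notin \ell$, so $\pi$ has algebraic dimension $3$ and is a plane. The intersection $\pi \cap Q$ is then exactly the set of isotropic points of the ternary form $Q|_{\pi}$, and the whole lemma becomes a classification of this restricted form.

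The key structural input is the orthogonality $P \in \ell^{\perp}$. Choosing a vector $v_P$ spanning $P$ and a basis $v_1, v_2$ of $\ell$, the associated bilinear form $f$ satisfies $f(v_P, v_1) = f(v_P, v_2) = 0$, so the restriction splits as an orthogonal sum
\[
  Q(\alpha v_P + \beta v_1 + \gamma v_2) = \alpha^2 Q(v_P) + g(\beta, \gamma),
\]
where $g(\beta,\gamma) = \beta^2 Q(v_1) + \gamma^2 Q(v_2) + \beta\gamma\, f(v_1,v_2)$ is the anisotropic binary form carried by $\ell$. The two cases are now immediate at the level of zeros. If $P \in Q_3$ then $Q(v_P) = 0$, so $Q|_{\pi} = g$ vanishes only when $\beta = \gamma = 0$, i.e.\ exactly on $\langle v_P\rangle = P$, giving $\langle P,\ell\rangle \cap Q = \{P\}$. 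If $P \notin Q_3$ then $Q(v_P) = c \neq 0$ and $Q|_{\pi} = c\alpha^2 + g(\beta,\gamma)$ is a ternary form, which I claim is non-degenerate and hence cuts out a (smooth) conic $Q(2,q)$.

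The one step needing genuine care — and the main obstacle — is verifying non-degeneracy of $c\alpha^2 + g$ in characteristic $2$, where a non-degenerate conic cannot be read off from $f$ alone, since $f$ is alternating and a ternary space always has a nontrivial radical (nucleus). The clean route is to compute this radical explicitly: anisotropy of $g$ forces $b := f(v_1,v_2) \neq 0$ (otherwise $g$ would be a perfect square, hence isotropic), from which one finds that $f|_{\pi}$ has radical exactly $\langle v_P\rangle$, and that $Q$ restricted to this radical is $\alpha^2 c$ with $c \neq 0$, i.e.\ anisotropic. This is precisely the condition for $Q|_{\pi}$ to be a non-degenerate (parabolic) ternary quadratic form, so $\langle P,\ell\rangle \cap Q \cong Q(2,q)$ as claimed. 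In odd characteristic the conclusion is automatic, since $f|_{\pi}$ is then block-diagonal with non-zero blocks and hence non-degenerate; I would present the argument uniformly and isolate this even-characteristic radical computation as the only place where the characteristic intervenes.
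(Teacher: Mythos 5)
Your proof is correct, but it takes a genuinely different route from the paper's. The paper argues synthetically via tangent hyperplanes: when $P\in Q_3$, the plane $\langle P,\ell\rangle$ lies inside $T_P(Q)$, and since $T_P(Q)\cap Q$ is a cone with vertex $P$, a second intersection point would give a line of $Q$ through $P$ inside the plane; any line in that plane meets $\ell$, contradicting $\ell\cap Q=\emptyset$. When $P\notin Q_3$, the paper notes the plane lies in no tangent hyperplane $T_{P'}(Q)$ (else $P'\in\langle P,\ell\rangle^\perp\subseteq\ell^\perp$, which forces $P'=P\notin Q$), and then invokes the standard fact that a plane section of a non-degenerate quadric is a non-degenerate conic exactly when the plane is contained in no tangent hyperplane. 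You instead compute with the quadratic form itself: the orthogonal splitting $Q|_{\langle P,\ell\rangle}=c\alpha^2+g(\beta,\gamma)$, with $g$ the anisotropic binary form carried by $\ell$, settles both cases at once. What your version buys is self-containedness and explicit control of characteristic $2$: the computation showing $f(v_1,v_2)\neq 0$, that the radical of $f|_\pi$ is exactly $\langle v_P\rangle$, and that $Q$ is anisotropic on it, is precisely the content that the synthetic argument delegates to the known classification of degenerate plane sections; you also verify that $P\notin\ell$, so that $\langle P,\ell\rangle$ really is a plane, which the paper leaves implicit. What the paper's version buys is brevity and uniformity in the characteristic, at the price of quoting that classification.
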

\begin{proof}
    If $P\in Q$, then $\langle P,\ell\rangle$ is a plane contained in the tangent hyperplane of $P$, and containing a line $\ell$ disjoint from $Q$. This implies that $\langle P,\ell\rangle$ does not contain lines, and hence, $P$ is the only point from $Q$ contained in it. 
    
    If $P\notin Q$, then $\langle P,\ell\rangle$ is a plane not contained in a tangent hyperplane of a certain point $P'$, as otherwise, $P'\in \ell^\perp$. Hence, $\langle P,\ell\rangle\cap Q$ is a non-degenerate conic $Q(2,q)$.
\end{proof}
Now we investigate how a solid $\sigma$ through $\ell$ can intersect $Q$. Note that the only possibilities for the intersection $\sigma\cap Q$ are a $Q^-(3,q), Q^+(3,q)$ or the cone $PQ(2,q)$, as we know that it should contain a line $\ell$, disjoint from $Q$.
\begin{lemma}
    Let $m$ be a line in $\ell^\perp$.
    \begin{enumerate}
        \item If $m\cap Q = \{P\}$, then $\langle m,\ell\rangle\cap Q = PQ(2,q)$.
        \item If $m\cap Q = \{P_1,P_2\}$, then $\langle m,\ell\rangle\cap Q = Q^-(3,q)$.
        \item If $m\cap Q = \emptyset$, then $\langle m,\ell\rangle\cap Q = Q^+(3,q)$.
    \end{enumerate}
\end{lemma}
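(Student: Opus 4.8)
The plan is to reduce the computation of $\sigma\cap Q$, where $\sigma=\langle m,\ell\rangle$, to the planar sections already handled in Lemma~\ref{lem:planeintersectQ}, and then to distinguish the three possible quadrics by a point count.

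First I would fix the pencil of planes through $\ell$ inside the solid $\sigma$. Since $\ell$ is disjoint from $Q$, the quadratic form restricted to $\ell$ is anisotropic and hence non-degenerate, so $\ell\cap\ell^\perp=\emptyset$; as $m\subseteq\ell^\perp$, the line $m$ is disjoint from $\ell$ and $\sigma$ is genuinely a solid. For each of the $q+1$ points $P$ of $m$, the span $\langle P,\ell\rangle$ is a plane through $\ell$, and one checks (using $\langle P,\ell\rangle\cap\ell^\perp=\{P\}$) that distinct points of $m$ give distinct planes. Hence these are exactly the $q+1$ planes of the pencil through $\ell$ in $\sigma$; they cover $\sigma$, and any two of them meet precisely in $\ell$. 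Because $\ell\cap Q=\emptyset$, the sets $\langle P,\ell\rangle\cap Q$ are pairwise disjoint, so
\[
    |\sigma\cap Q| = \sum_{P\in m}|\langle P,\ell\rangle\cap Q|.
\]

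Next I would evaluate each summand with Lemma~\ref{lem:planeintersectQ}. As $m\subseteq\ell^\perp$, a point $P\in m$ lies on $Q$ if and only if $P\in\ell^\perp\cap Q=Q_3$, so $m\cap Q=m\cap Q_3$; for such $P$ the section $\langle P,\ell\rangle\cap Q$ is the single point $P$, while for $P\in m\setminus Q$ it is a conic with $q+1$ points. Substituting $|m\cap Q|\in\{1,2,0\}$ gives totals $1+q(q+1)=q^2+q+1$, $2+(q-1)(q+1)=q^2+1$, and $(q+1)(q+1)=(q+1)^2$ in the three cases. These three values are pairwise distinct and equal the numbers of points of $PQ(2,q)$, $Q^-(3,q)$, and $Q^+(3,q)$, respectively. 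Since, as noted just before the lemma, $\sigma\cap Q$ must be one of these three quadrics, the point count identifies it uniquely in each case.

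The only genuinely delicate point is the first case, where I must also check that the vertex of the cone is the point $P=m\cap Q$ named in the statement. This follows because a plane of $\sigma$ meeting the quadratic cone $\sigma\cap Q$ in a single point must pass through the vertex and meet it only there, so that single point is forced to be the vertex; since $\langle P,\ell\rangle\cap Q=\{P\}$, the vertex is $P$. Everything else is routine, so the main work is organizing the pencil decomposition and the disjointness argument cleanly; no real obstacle remains once the partition $\sigma\cap Q=\bigsqcup_{P\in m}(\langle P,\ell\rangle\cap Q)$ is in place.
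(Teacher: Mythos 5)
Your proof is correct, but it takes a genuinely different route from the paper's. The paper handles the three cases synthetically: for (1) it observes that $\langle m,\ell\rangle\subseteq T_P(Q)$ and that a plane of $Q$ inside the solid would have to meet $\ell$, so no such plane exists and the section must be the cone; for (2) it invokes Lemma \ref{lem:planeintersectQ} to produce two planes meeting $Q$ in exactly one point and notes the solid lies in neither $T_{P_1}(Q)$ nor $T_{P_2}(Q)$, forcing $Q^-(3,q)$; for (3) it notes that every plane through $\ell$ cuts a conic, forcing $Q^+(3,q)$. You instead make the pencil structure explicit and uniform: the $q+1$ planes through $\ell$ in the solid are exactly the $\langle P,\ell\rangle$ with $P\in m$, their traces on $Q$ partition $\langle m,\ell\rangle\cap Q$ because $\ell\cap Q=\emptyset$, and Lemma \ref{lem:planeintersectQ} converts this into the point counts $q^2+q+1$, $q^2+1$ and $(q+1)^2$, which are pairwise distinct and therefore identify the section among the three quadrics listed in the paper just before the lemma. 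Your approach buys uniformity (one mechanism for all three cases, with identification reduced to arithmetic) and it also supplies a detail the paper leaves implicit, namely that the distinguished point $P$ in case (1) really is the vertex of the cone; the paper's approach buys brevity and structural explanations (degeneracy versus non-degeneracy of the section) rather than a cardinality comparison. Two small remarks: your claim that an anisotropic binary form is non-degenerate, used to get $\ell\cap\ell^\perp=\emptyset$, deserves one extra line in characteristic $2$ (there the polar form of $ax^2+cy^2$ vanishes identically, but such a form is isotropic over $\mathbb{F}_q$, so anisotropy does force a non-vanishing polar form); and note that both your argument and the paper's rest on the same two external inputs, Lemma \ref{lem:planeintersectQ} and the preceding classification of possible solid sections, so neither is self-contained on that point.
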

\begin{proof}
    \begin{enumerate}
        \item If $m\cap Q = \{P\}$, then $\langle m,\ell\rangle$ is contained in the tangent hyperplane $T_P(Q)$. If $\langle m,\ell\rangle$ would contain a plane of $Q$, then $\ell$ cannot be disjoint from this plane, and hence, disjoint from $Q$. This implies that $\langle m,\ell\rangle$ does not contain planes of $Q$, and hence, it should intersect $Q$ in the cone $PQ(2,q)$.
        \item If $m\cap Q = \{P_1,P_2\}$, then, by Lemma \ref{lem:planeintersectQ}, we know that $\langle m,\ell\rangle$ contains two planes $\pi_1$ and $\pi_2$ such that $\pi_i\cap Q=P_i$, and furthermore $\langle m,\ell\rangle$ is not contained in the tangent hyperplanes $T_{P_1}(Q)$ nor $T_{P_2}(Q)$. Hence $\langle m,\ell\rangle\cap Q = Q^-(3,q)$.
        \item If $m\cap Q = \emptyset$, then, by Lemma \ref{lem:planeintersectQ}, we know that all planes through $\ell$ meet $Q$ in a conic. Hence, $\langle m,\ell\rangle\cap Q = Q^+(3,q)$. \qedhere
    \end{enumerate}
\end{proof}
Now we take a line spread $S$ in $\ell^\perp$. Let $\alpha$ be the number of tangent lines to $Q$ in $S$. Since we know that $|S|=q^2+1$ and $S$ partitions the $q^2+1$ points of $Q_3$, we can check that the number of bisecants in $S$ is equal to the number of lines disjoint to $Q$ in $S$, which is $\frac{q^2+1-\alpha}{2}$.

For every line $m\in S$, let $\mathcal{F}_m$ be the set of lines of $Q$ in $\langle m, \ell\rangle$. Note that $\mathcal{F}_m$ contains the $2q+2$ lines of $Q^+(3,q)$ if $m\cap Q = \emptyset$, that $\mathcal{F}_m$ contains the $q+1$ lines of a cone $PQ(2,q)$ if $|m\cap Q|=1$ and $|\mathcal{F}_m| = 0$ if $|m\cap Q|=2$.  

\begin{theorem}
Let $\mathcal{F} = \bigcup_{m\in S}\mathcal{F}_m$. Then $\mathcal{F}$ is a $(3,2)$-ovoid in $Q$.
\end{theorem}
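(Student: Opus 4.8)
The plan is to verify the two numerical/combinatorial ingredients of an $(3,2)$-ovoid separately: that $|\cF|$ has the right size, and that no generator (plane) of $Q$ contains two members of $\cF$; a double count then forces ``at most one'' to become ``exactly one''. For the size, note that $Q^+(5,q)$ has rank $r=3$ and parameter $e=0$, so by Lemma \ref{lem:nrinovoid} a $(3,2)$-ovoid must have $(q^2+1)(q+1)$ elements. Using $|S|=q^2+1$ and writing $\alpha$ for the number of tangent lines of $S$ to $Q$, the preceding discussion gives $\tfrac{q^2+1-\alpha}{2}$ external lines (each contributing $2q+2$ lines of $Q^+(3,q)$), $\alpha$ tangent lines (each contributing $q+1$ lines of the cone $PQ(2,q)$), and $\tfrac{q^2+1-\alpha}{2}$ bisecants (contributing nothing). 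Summing, $|\cF|=(q^2+1-\alpha)(q+1)+\alpha(q+1)=(q^2+1)(q+1)$, independent of $\alpha$, matching the required size. Since each line of $Q$ lies in exactly $\prod_{i=1}^{1}(q^{1-i}+1)=2$ generators, and $Q^+(5,q)$ has $2(q^2+1)(q+1)$ generators, counting incident pairs (line of $\cF$, generator) gives $2|\cF|=2(q^2+1)(q+1)$, exactly the number of generators. Hence once we show every generator contains \emph{at most} one element of $\cF$, equality in the count forces precisely one.

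The key tool is the projection $p$ from $\ell$ onto $\ell^\perp$. Because $\ell$ is disjoint from $Q$, the form is anisotropic on $\ell$, so $\ell\cap\ell^\perp=\emptyset$ and $\langle \ell,\ell^\perp\rangle$ is the whole space; thus $\ell$ and $\ell^\perp$ are complementary and $p$ sends each subspace $U$ skew to $\ell$ to $\langle U,\ell\rangle\cap\ell^\perp$. A dimension count gives $\langle m,\ell\rangle\cap\ell^\perp=m$ for every $m\in S$. The central observation is that every line $L\in\cF_m$ projects onto $m$: indeed $L\subseteq Q$ is disjoint from $\ell$ and lies in the solid $\langle m,\ell\rangle$, so $\langle L,\ell\rangle$ is a solid contained in $\langle m,\ell\rangle$, hence equal to it, whence $p(L)=\langle L,\ell\rangle\cap\ell^\perp=\langle m,\ell\rangle\cap\ell^\perp=m$. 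In particular the unions $\cF_m$ are pairwise disjoint, so the size computation above is legitimate.

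Now suppose a generator $\pi$ (a plane of $Q$) contains two distinct lines $L_1,L_2$ of $\cF$. If $L_1,L_2\in\cF_m$ for the same $m$, then $\pi=\langle L_1,L_2\rangle\subseteq\langle m,\ell\rangle$, so $\pi\subseteq\langle m,\ell\rangle\cap Q$; but each of $Q^+(3,q)$, $Q^-(3,q)$, $PQ(2,q)$ is a three-dimensional quadric containing no plane, a contradiction. If instead $L_1\in\cF_{m_1}$ and $L_2\in\cF_{m_2}$ with $m_1\neq m_2$, then $\pi$ is disjoint from $\ell$ (as $\pi\subseteq Q$), so $p(\pi)=\langle\pi,\ell\rangle\cap\ell^\perp$ is defined, and a dimension count shows it is a plane of $\ell^\perp$. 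This plane contains $p(L_1)=m_1$ and $p(L_2)=m_2$, two \emph{distinct} lines of the spread $S$, which are therefore disjoint; but two disjoint lines cannot both lie in a single plane, a contradiction. Hence every generator contains at most one member of $\cF$, and the double count completes the proof. The main obstacle is the cross-family case: it is exactly here that the spread property of $S$ is used, via the projection $p$ which simultaneously records which solid $\langle m,\ell\rangle$ a line of $\cF$ came from and converts coplanarity of two members of $\cF$ into coplanarity of two spread lines.
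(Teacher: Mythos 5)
Your proof is correct and follows essentially the same strategy as the paper: compute $|\mathcal{F}|=(q^2+1)(q+1)=q^3+q^2+q+1$, the size of a $(3,2)$-ovoid, and then show that no plane of $Q$ contains two members of $\mathcal{F}$, so that the count forces exactly one per generator. The only difference is in the ``at most one'' step: the paper observes that two coplanar lines of $\mathcal{F}$ meet in a point of $Q$, which forces both lines into a single solid $\langle m,\ell\rangle$ (since distinct such solids meet only in $\ell$, which is disjoint from $Q$) and hence forces the plane to meet $\ell$ --- a contradiction; your case split with the projection from $\ell$ onto $\ell^\perp$ is a slightly longer but equally valid rendering of the same geometric fact, and it has the small merit of making explicit both the disjointness of the families $\mathcal{F}_m$ and the step the paper leaves terse.
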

\begin{proof}
    We have to prove that every plane in $Q$ contains precisely one line of $\mathcal{F}$.
    First note that $|\mathcal{F}| = 2(q+1)\cdot \frac{q^2+1-\alpha}{2}+(q+1)\cdot \alpha = q^3+q^2+q+1$. As we know that a $(3,2)$-ovoid in $Q^+(5,q)$ contains this number of lines, it is sufficient to prove that every plane in $Q$ contains at most one line in $\mathcal{F}$.
    Suppose there is a plane $\pi$ containing two lines $l_1, l_2$ of $\F$. Then $l_1$ and $l_2$ intersect in a point, and hence, $\pi$ should be contained in one of the solids $\langle m, \ell\rangle$ for $m\in S$. But then, $\pi\cap \ell\neq \emptyset$, which gives a contradiction, since $\ell$ is disjoint from $Q$.
\end{proof}

\subsection{Example of \texorpdfstring{$(3, 2)$}{(3, 2)}-ovoid in \texorpdfstring{$Q^-(7, 2)$}{Q-(7, 2)}} \label{sec:msystem_ex}
% \JD{Do we need a reference?}
A $m$-system of a polar space $\cP$ is a family $M$ of $(m+1)$-spaces
of $\cP$ such that $S^\perp \cap T$ is trivial for all distinct $S, T \in M$.
See \cite{ShultThas1994}.
Let $X$ be the point set of the classical $1$-system of
$Q^-(7, q)$ which can be obtained by field reduction from $Q^-(3, q^2)$.
Then lines of $Q^-(7, q)$ meet $X$ in $0$, $1$, $2$, or $q+1$ points.
Then there are
\begin{itemize}
 \item $q^4+1$ lines in $X$,
 \item $(q^4+1)(q+1) q^4/2$ secants,
 \item $(q^4+1)(q+1)(q^3+q)$ tangents,
 \item $(q^4+1)(q^5-q^4)/2$ passants.
\end{itemize}
For $q=2$, $(q^5-q^4)/2 = q^3$ and taking the union of lines in $X$
and all passants has size $(q^3+1)(q^4+1) = 153$. Indeed, this is
a $(3, 2)$-ovoid of $Q^-(7, 2)$. 
We could not generalize this constrution.
%\JD{I'm again confused.. Take a plane, not containing a point of the $1$-system. Then this plane contains $q+1$ points of the lines of the system, which form a conic. But then, there are many skew lines in this plane? Aha indeed, for $q=2$ there is only one such skew line.}

% \begin{itemize}
%  \item Can we generalize this to more $q$?
%  \item Can we generalize this to larger rank?
%  The next case in $Q^-(11, q)$.
% \end{itemize}

\section{The Non-Existence of \texorpdfstring{$(r, k)$}{(r, k)}-Ovoids for \texorpdfstring{$r \gg k$}{r >> k}} \label{sec:nonex}

Here we will show the following quantitative version of
Theorem \ref{thm:main_nonex}.

\begin{theorem}\label{thm:main_nonex_quant}
    Let $k$ be a positive integer.
    Let $q=p^h$ be a prime power and let $\cP$ be
    a polar space of rank $r$ and parameter $e$ over the field with $q$ elements, where  $r \geq k+1$.
    Let $\cO$ be a partial $(r, k)$-ovoid of $\cP$.
    Then
    \[
        |\cO| \leq \left(\prod_{i=1}^{k-1} \frac{(q^{r-i+1}-1)(q^{r+e-i}+1)}{(q^{i+1}-1)q^{2r+e-k-i}}  \right) \cdot (p+2r-2k+3)^{k h (p-1)}\leq 2^{k-1} (p+2r-2k+3)^{k h (p-1)}.
    \]
    %Furthermore, we can choose $C=....$.
\end{theorem}

We will need a technical lemma for the proof.

\begin{lemma}\label{lem:lns_in_deg}
    Let $r \geq k+1$.
    Let $\cP$ be a finite classical polar space of rank $r$ with
    parameter $e$ naturally embedded in $V(n, q)$.
    Let $H$ be a degenerate hyperplane of $\cP$ in $V(n, q)$.
    Then the number of $k$-spaces of $\cP$ in $H$ is 
    \[
     \frac{q^{2r+e-k-1}+q^r-q^{r+e-1}-1}{(q^r-1)(q^{r+e-1}+1)} 
    \]
    of the total number of $k$-spaces in $\cP$.
\end{lemma}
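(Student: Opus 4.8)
The plan is to make a degenerate hyperplane $H$ of $\cP$ concrete via the associated polarity $\perp$: every hyperplane of $V(n,q)$ equals $P^\perp$ for a unique point $P$, and $H\cap\cP$ is degenerate exactly when $P$ is isotropic, i.e. $P\in\cP$. In that case $H=T_P(\cP)$, and by the structure recalled before Lemma \ref{lem:numberofkinpolar}, $H\cap\cP$ is a cone with vertex $P$ over a polar space $\cP'$ of rank $r-1$ and the same parameter $e$. The reformulation I would exploit is that a totally isotropic $k$-space $\pi$ lies in $H$ if and only if $P\in\pi^\perp$, equivalently $P\perp\pi$.

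First I would split the $k$-spaces $\pi\subseteq H$ according to whether $P\in\pi$. The $k$-spaces through $P$ are in bijection with the totally isotropic $(k-1)$-spaces of the quotient polar space $\cP'$, so Lemma \ref{lem:numberofkinpolar}, applied in rank $r-1$ and parameter $e$, counts them as $\qbin{r-1}{k-1}\prod_{i=1}^{k-1}(q^{r-1+e-i}+1)$. For the $k$-spaces $\pi$ with $P\notin\pi$ but $P\perp\pi$, the key observation is that $\langle P,\pi\rangle$ is a totally isotropic $(k+1)$-space through $P$, hence projects to a totally isotropic $k$-space $\bar\pi$ of $\cP'$; conversely each such $\bar\pi$ lifts to a unique totally isotropic $(k+1)$-space $\tilde\pi$ through $P$, and the $k$-subspaces of $\tilde\pi$ avoiding $P$ number exactly $\qbin{k+1}{1}-\qbin{k}{1}=q^k$. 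This gives $q^k\qbin{r-1}{k}\prod_{i=1}^{k}(q^{r-1+e-i}+1)$ spaces, and summing the two cases yields the total number $N_H$ of $k$-spaces of $\cP$ in $H$.

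Finally I would divide $N_H$ by the total count $N_k=\qbin{r}{k}\prod_{i=1}^{k}(q^{r+e-i}+1)$ from Lemma \ref{lem:numberofkinpolar}. Using the telescoping identities $\qbin{r-1}{k-1}/\qbin{r}{k}=(q^k-1)/(q^r-1)$ and $\qbin{r-1}{k}/\qbin{r}{k}=(q^{r-k}-1)/(q^r-1)$, together with the factorization $\prod_{i=1}^{k}(q^{r+e-i}+1)=(q^{r+e-1}+1)\prod_{i=1}^{k-1}(q^{r-1+e-i}+1)$, all the products cancel and the ratio collapses to $\frac{1}{(q^{r+e-1}+1)(q^r-1)}\big[(q^k-1)+q^k(q^{r-k}-1)(q^{r-1+e-k}+1)\big]$. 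Expanding the bracket, the $\pm q^k$ terms cancel and one is left with $q^{2r+e-k-1}+q^r-q^{r+e-1}-1$, giving the claimed fraction.

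The main obstacle is the second case: one must correctly establish both the lift/project correspondence between $k$-spaces off the vertex and totally isotropic $k$-spaces of $\cP'$, and the multiplicity $q^k$, since miscounting the spaces avoiding the vertex $P$ is the easiest error to make. Once that bijective count is pinned down, the remaining step is purely Gaussian-binomial bookkeeping and should go through mechanically.
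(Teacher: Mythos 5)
Your proof is correct and follows essentially the same route as the paper: identify $H\cap\cP$ as a cone with vertex $P=H^\perp$ over a rank-$(r-1)$ polar space of the same parameter, split the $k$-spaces in $H$ according to whether they contain $P$, count the two classes as $\qbin{r-1}{k-1}\prod_{i=1}^{k-1}(q^{r-1+e-i}+1)$ and $q^k\qbin{r-1}{k}\prod_{i=1}^{k}(q^{r-1+e-i}+1)$ respectively, and simplify the ratio. Your explicit lift/project bijection and the $q^k$ multiplicity are exactly the justification the paper uses implicitly, and your Gaussian-binomial simplification reproduces the paper's final computation.
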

\begin{proof}
 Using Lemma \ref{lem:numberofkinpolar}, we find that the number $k$-spaces in
 a rank $r$ polar space of type $e$ is \[\gauss{r}{k} \prod_{i=1}^k (q^{r+e-i}+1).\]

Note that since $H$ is a degenerate hyperplane, we know that $H\cap \cP$ is a cone with vertex $P=H^\perp$ and basis a polar space $\cP'_H$ with the same parameter $e$, and with rank $r-1$.

Now we calculate the number of $k$-spaces in  $H$.

We first count the number of $k$-spaces of $\cP$ in $H$ through $P$. By investigating the quotientspace of $P$, we find that this number is equal to the number of $(k-1)$-spaces in $\cP'_H$, and hence, is equal to 
\[\gauss{r-1}{k-1} \prod_{i=1}^{k-1} (q^{r-1+e-i}+1).\]

Now we count the number of $k$-spaces of $\cP$ in $H$ not though $P$. For this, we can project each of these $k$-spaces to the basis $\cP'_H$, and see that this number of $k$-spaces is equal to the number of $k$-spaces in $\cP'_H$ times the number of $k$-spaces in a $(k+1)$-space through $P$, but not containing $P$. 
This gives that this number of $k$-spaces of $\cP$ in $H$ not through $P$ is equal to
\[\gauss{r-1}{k} \prod_{i=1}^{k} (q^{r-1+e-i}+1)\cdot q^k.\]

This now implies that the number of $k$-spaces in $\cP\cap H$ is $X$ of the number of $k$-spaces in $\cP$ with 
\begin{align*}
    X&=\frac{q^k\gauss{r-1}{k} \prod_{i=1}^{k} (q^{r-1+e-i}+1)+\gauss{r-1}{k-1} \prod_{i=1}^{k-1} (q^{r-1+e-i}+1)}{\gauss{r}{k} \prod_{i=1}^k (q^{r+e-i}+1)}\\
    &=\frac{q^{2r+e-k-1}+q^r-q^{r+e-1}-1}{(q^r-1)(q^{r+e-1}+1)}.  \qedhere
\end{align*}
\end{proof}

\begin{proof}[Proof of Theorem \ref{thm:main_nonex_quant}]
    We will prove the result by induction.
    Theorem \ref{thm:blokhouse} shows the claim for $k=1$.

    %For the induction we will show the slightly stronger claim that
    %\[
    % |\cO| \leq \left( \prod_{i=1}^k (1 + \frac{1}{q^{-r-e+2-i}-1}) \right)
    %        \cdot (p+2r-1)^{(k-1) h (p-1)}.
    %\]

    Let $k \geq 2$.
    We can assume that the assertion is true for $(r-1, k-1)$.
    Recall that the collineation group of $\cP$ acts transitively on
    $k$-spaces of $\cP$, so each such $k$-space lies in the same number
    of degenerate hyperplanes.

    Hence, by Lemma \ref{lem:lns_in_deg}, we know that  a degenerate hyperplane 
     contains on average
     \[|\cO| \cdot \frac{q^{2r+e-k-1}+q^r-q^{r+e-1}-1}{(q^r-1)(q^{r+e-1}+1)}\] $k$-spaces of $\cO$. Let $H$ be such a degenerate hyperplane containing at least this number of $k$-spaces of $\cO$.

     The other elements of $\cO$, not contained in $H$ meet $H$ in a $(k-1)$-space. Let $\cO'$ be the set of all these $(k-1)$-spaces, and note that such a $(k-1)$-space cannot contain the vertex $H^\perp$.
    %Put
    %\[
    %    \cO' = \{ K \cap H: \dim(K \cap H) = k-1 \}.
    %\]
    %\JD{K is k-space in $O$ that meets H in a $(k-1)$-space and remove the ones through $H^\perp$. Dit zijn er niet zo veel; gebruik induction.}
    If we project all elements of $\cO'$ to the basis of the cone $H\cap \cP$, we see that $\cO'$ corresponds to a partial $(r-1, k-1)$-ovoid in the quotient
    of $H^\perp$ and we can apply our bound for these parameters.
    Furthermore, for some $L \in \cO'$,
    \[
        \{ K/L: K \in \cO, \, K \cap H = L \}
    \]
    is a partial $(r-k+1, 1)$-ovoid, that is a partial ovoid.
    Hence, by Lemma \ref{thm:blokhouse},
    each element of $\cO'$ lies in at most $(p+2r-2k+3)^{h(p-1)}$
    elements of $\cO$. Hence,
    
    \[
        |\cO| \leq |\cO'|(p+2r-2k+3)^{h(p-1)}+|\cO| \frac{q^{2r+e-k-1}+q^r-q^{r+e-1}-1}{(q^r-1)(q^{r+e-1}+1)},
    \]
    which implies that 
     \begin{align*}
         |\cO| &\leq \frac{(q^r-1)(q^{r+e-1}+1)}{q^{2r+e-k-1}(q^k-1)}|\cO'|(p+2r-2k+3)^{h(p-1)}\\
         &\leq \frac{(q^r-1)(q^{r+e-1}+1)}{q^{2r+e-k-1}(q^k-1)}\left(\prod_{i=1}^{k-2} \frac{(q^{r-i}-1)(q^{r-1+e-i}+1)}{(q^{i+1}-1)q^{2r-1+e-k-i}}  \right)\\ &\hspace{3cm} \cdot (p+2r-2k+3)^{(k-1) h (p-1)}(p+2r-2k+3)^{h(p-1)}
     \end{align*}
    
    This shows the claimed bound.
    %The claims for $C$ follows from
    %\[
    % \prod_{i=1}^k (1 + \frac{1}{q^i-1}) \leq \prod_{i=1}^k (1 + \frac{2}{q^i}) = \frac13 (-2; q^{-1})_k \leq 2.
    %\]
    %Here $(a; q)_n$ is the $q$-Pochhammer symbol.
\end{proof}

For $p$ and $k$ fixed, the last bound in Theorem \ref{thm:main_nonex_quant}
is a polynomial in $r$. Lemma \ref{lem:nrinovoid} states
the size of an $(r, k)$-ovoid and it is an exponential function in $r$.
As an exponential function grows faster than any polynomial, this
shows Theorem \ref{thm:main_nonex}. Also note that when comparing 
the size of an $(r, k)$-ovoid with the given bound, then $h$
cancels on both sides. 

Lastly, let us note the following 
non-homogenous variant of Theorem \ref{thm:main_nonex}
which follows immediately from Theorem \ref{thm:main_nonex_quant}.
It can be interpreted as a variant of the observations 
in \cite{Irrsubcubepart} and \cite{AffinePaper}
that a tight irreducible subcube partition, respectively,
a tight irreducible affine vector space partition can only 
have a small number of subcubes, respectively, subspaces 
of large dimension (recall from Section \ref{sec:defs} that large dimensions in the hypercube 
setting correspond to small dimensions in polar spaces).

\begin{corollary}
    Let $k$ be a positive integer.
    Let $q=p^h$ be a prime power and let $\cP$ be
    a polar space of rank $r$ over the field with $q$ elements, where  $r \geq k+1$.
    Then for $k$, $p$ fixed, and $r \rightarrow \infty$, the proportion 
    of elements of $\cO$ of dimension at most $k$ is $o(1)$.
\end{corollary}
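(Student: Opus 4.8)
The plan is to control the numerator and the denominator of the proportion separately: the number of low-dimensional members is bounded \emph{above} by a polynomial in $r$ directly from Theorem \ref{thm:main_nonex_quant}, while the total size $|\cO|$ is bounded \emph{below} by an exponential in $r$ via a covering identity. Write $\cO$ for the generalized ovoid and, for $1 \le j \le r$, let $n_j$ denote the number of its members of dimension $j$. Since each member of dimension $j$ lies in $g_j := \prod_{i=1}^{r-j}(q^{(r-j)+e-i}+1)$ generators and each generator contains precisely one member of $\cO$, double counting the incident pairs (generator, member it contains) yields the identity $\sum_{j=1}^r n_j\, g_j = G$, where $G := \prod_{i=1}^r (q^{r+e-i}+1)$ is the number of generators (Lemma \ref{lem:numberofkinpolar}). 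Note that $G/g_j = \prod_{i=1}^j (q^{r+e-i}+1)$ is exactly the size of an $(r,j)$-ovoid by Lemma \ref{lem:nrinovoid}.

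First I would bound the numerator. For each fixed $j$, the members of $\cO$ of dimension $j$ form a partial $(r,j)$-ovoid, since each generator contains at most one member of $\cO$. Hence Theorem \ref{thm:main_nonex_quant} applies for every $j \le k$ (using $r \ge k+1 > j$) and gives $n_j \le 2^{j-1}(p+2r-2j+3)^{j h (p-1)}$. Summing over $j = 1, \dots, k$ shows that $\sum_{j=1}^k n_j$ is bounded by a polynomial in $r$ of degree at most $k h (p-1)$, with $p$, $k$, $h$ fixed.

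The core of the argument is a matching lower bound on $|\cO|$. For $j \le k$ the per-member coverage ratio $g_j/G = 1/\prod_{i=1}^j(q^{r+e-i}+1)$ is exponentially small in $r$; since $g_j$ is decreasing in $j$ we get $\sum_{j=1}^k n_j g_j \le \big(\sum_{j=1}^k n_j\big) g_1 \le G\cdot \mathrm{poly}(r)/(q^{r+e-1}+1) = o(G)$, because an exponential beats any polynomial. Thus the members of dimension at most $k$ account for only an $o(1)$ fraction of the generators, and by the covering identity the members of dimension greater than $k$ must cover at least $(1-o(1))G$ of them. As each such member covers at most $g_{k+1}$ generators, we obtain $\sum_{j>k} n_j \ge (1-o(1))\, G/g_{k+1} = (1-o(1))\prod_{i=1}^{k+1}(q^{r+e-i}+1)$, which is exponential in $r$. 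In particular $|\cO| \ge \sum_{j>k} n_j$ grows exponentially.

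Combining the two bounds, the proportion of members of dimension at most $k$ is $\big(\sum_{j\le k} n_j\big)/|\cO| \le \mathrm{poly}(r)/\exp(r) = o(1)$, as required. I do not expect a deep obstacle, since Theorem \ref{thm:main_nonex_quant} does the real work; the only genuine content is the realization that one must lower-bound the \emph{denominator} $|\cO|$, which is achieved through the covering identity rather than through any further non-existence input. The one point that warrants care is checking that the factor $h$ cancels in the comparison of $\mathrm{poly}(r)$ against $\prod_{i=1}^{k+1}(q^{r+e-i}+1)$ (both scale with $h$, as noted after Theorem \ref{thm:main_nonex_quant}), so that the conclusion is uniform in $h$.
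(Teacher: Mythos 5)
Your proof is correct and follows essentially the same route as the paper, which states the corollary as an immediate consequence of Theorem \ref{thm:main_nonex_quant}: apply that theorem to each dimension class $j \le k$ (a partial $(r,j)$-ovoid) to get a polynomial upper bound, and beat it with an exponential lower bound on $|\cO|$ coming from the fact that each generator is covered exactly once. Your refined denominator bound $|\cO| \ge (1-o(1))\,G/g_{k+1}$ is slightly stronger than necessary — the crude bound $|\cO| \ge G/g_1 = q^{r+e-1}+1$ already suffices — but this does not change the nature of the argument, and your attention to uniformity in $h$ matches the paper's remark that $h$ cancels in the comparison.
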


\section{Future Work}

Here we show that $(r, k)$-ovoids
are rare for $k$ small compared to $r$. It would be very interesting
to provide more concrete bounds, maybe even just for $k=2$.
Conversely, we show that there are plenty of examples
for $(r, r-1)$-ovoids in polar spaces with parameter $e \in \{ 0, 1/2, 1 \}$.
This suggests that for $r-k$ small, $(r, k)$-ovoids exist,
but we lack constructions. This is also true for the
non-homogenous case.

More generally, an $(r, k)$-ovoid covers each generator of a polar space
precisely once. Hence, it is a design in some sense and the following
question is natural: Can we find a family $\cD$ of $k$-spaces such that each
$t$-space contains precisely $\lambda$ elements of $\cD$?
Our question specializes to the case $(t, k, \lambda) = (r, k, 1)$.
The related existence question of covering $t$-spaces with $k$-spaces,
that is, if we can cover each generator with the same number of $k$-spaces,
has been recently answered in \cite{Weiss2023}
by Weiß.

%-------------------------------------------------
%-------------------------------------------------
% \section{Questions}
% \begin{itemize}
%     \item What do we know about a $generalized^2$ $(r,k,l)$-ovoid, which is a set of $k$-spaces such that any $l$-space contains precisely one of them?
%     \item Do $(r,r-1)$-ovoids always exist? Yes for hyperbolic polar spaces. But what about the other ones?
%     \item Given $k$, is there an upper bound on $r$ such that $(r,k)$-ovoids exist?
%     \textcolor{gray}{
%     \item Can we use square and non-square subspaces?
%     \item Is there a generalization of the twisting construction?
%     \item Can we change the recursive construction a little bit, similar to how we use the spread in the $Q^+(5,q)$ example? So replacing two pencils by a hyperbolic quadric? }
%     \item Idea of Sam: for $(3,2)$-ovoid in $Q^+(5,q)$: go to Klein correspondence, so then we look for incident point-plane pairs P such that every point and every plane is adjacent to one of the point-plain pairs. Take one point-plane pair, and take the singer cycle of this pair. This gives $\theta_3$ pairs, and hence, $\theta_3$ lines in $Q^+(5,q)$, forming an ovoid.\\
%     More general: take the bipartite incidence graph of points and planes in $\PG(3,q)$, and look for a perfect matching. This will also correspond to a line-ovoid in $Q^+(5,q)$.
% \end{itemize}
%

% \bibliographystyle{alpha}
% \bibliography{sample}

\bibliographystyle{abbrv}
% \bibliography{sample.bib}

\end{document}